\documentclass[oneside,12pt,reqno]{amsart}
\usepackage{amssymb,amsmath,amsthm,bbm,enumerate,mdwlist,url,multirow,hyperref,amsthm}
\usepackage[pdftex]{graphicx}
\usepackage[shortlabels]{enumitem}
\usepackage[T1]{fontenc}

\addtolength{\hoffset}{-1.5cm}
\textwidth 16.5cm
\addtolength{\textheight}{2cm}
\topmargin -0.4cm
\sloppy

\linespread{1.3}
\theoremstyle{definition}
\newtheorem{definition}{Definition}
\theoremstyle{theorem}
\newtheorem{proposition}[definition]{Proposition}

\newtheorem{theorem}[definition]{Theorem}
\newtheorem{corollary}[definition]{Corollary}

\numberwithin{equation}{section}
\numberwithin{definition}{section}
\theoremstyle{remark}
\newtheorem{remark}[definition]{Remark}

\def\PP{\mathsf{P}}
\def\GG{\mathfrak{G}}

\def\ZZ{\mathbb{Z}}
\def\NN{\mathbb{N}}
\def\NN0{\mathbb{N}_0}

\def\dd{\mathrm{d}}

\def\i{{\rm i}}
\def\D{{\rm d}}


\newmuskip\pFqmuskip

\newcommand*\pFq[6][8]{%
  \begingroup 
  \pFqmuskip=#1mu\relax
  \mathcode`\,=\string"8000
  \begingroup\lccode`\~=`\,
  \lowercase{\endgroup\let~}\pFqcomma
  {}_{#2}F_{#3}{\left[\genfrac..{0pt}{}{#4}{#5};#6\right]}%
  \endgroup
}
\newcommand{\pFqcomma}{\mskip\pFqmuskip}

\bibliographystyle{amsplain}
\begin{document}
\title[Double hypergeometric L\'evy processes]{Double hypergeometric L\'evy processes and self-similarity}
\author{Andreas E. Kyprianou}
\address{Andreas E. Kyprianou, Department of Mathematical Sciences, University of Bath, Claverton Down, Bath, BA2 7AY, UK.}
\email{a.kyprianou@bath.ac.uk}
\author{Juan Carlos Pardo}
\address{Juan Carlos Pardo, Apartado Postal 402, CP 36000, 
Calle Jalisco s/n, Mineral de Valencianam
Guanajuato, Gto. Mexico}
\email{jcpardo@cimat.mx}
\author{Matija Vidmar$^*$}
\address{Matija Vidmar, Department of Mathematics, University of Ljubljana, and Institute of Mathematics, Physics and Mechanics, Slovenia. $^{*}$Corresponding author.}
\email{matija.vidmar@fmf.uni-lj.si}
\begin{abstract} Motivated by a recent paper of Budd \cite{budd}, where a new family of positive self-similar Markov processes  associated to stable processes appears,  we introduce a new family of L\'evy processes, called the double hypergeometric class, whose Wiener-Hopf factorisation is explicit, and as a result many functionals can be determined in closed form.
\end{abstract}

\keywords{L\'evy processes; Wiener-Hopf factorisation; self-similar Markov processes; complete Bernstein functions; Pick functions}

\subjclass[2010]{60G51} 

\maketitle

\section{Introduction}
\noindent Real-valued L\'evy processes are the class of stochastic processes which have stationary and independent increments  and paths that are right-continuous with left-limits.  We may include in this definition the possibility that the L\'evy process is killed and sent to a cemetery state, in which case the killing time must necessarily take the form of  an independent and exponentially distributed random time. As a consequence of this definition alone, one can proceed to show that they are strong Markov processes whose transitions are entirely characterised by a single quantity, the characteristic exponent. That is to say, if $(\xi_t,t\geq 0)$ is a one-dimensional L\'evy process with probabilities $(\mathbf{P}_x,x\in\mathbb{R})$, then, for all $t\geq 0$ and $\theta\in \mathbb{R}$, there exists a function $\Psi: \mathbb{R}\to\mathbb{C}$ such that 
\[
\mathbf{E}_0[{\rm e}^{\i \theta \xi_t}] = {\rm e}^{-\Psi(\theta)t}, \qquad t\geq 0,
\]
where $\Psi$ can be described by  the so-called L\'evy-Khintchine formula, i.e. 
\[
\Psi \left( {z} \right) =q+\mathrm{i}a {z} +\frac{1}{2}   \sigma^2 z^2+\int_{\mathbb{R}}(1-\mathrm{e}^{\mathrm{i}{z}  x}+\mathrm{i}{z}  x\mathbf{1}%
_{(|x|<1)})\,\Pi ({\D}x), \qquad z\in \mathbb{R},
\]
with  $q\geq 0$,
$a\in \mathbb{%
R}$, $\sigma^2\geq 0$  and $\Pi $ is a measure (called the L\'evy measure) concentrated on
$\mathbb{R}\backslash \{0\}$ satisfying $\int_{\mathbb{R}}(
1\wedge x^{2}) \Pi ({\D}x)<\infty$. Here, $q$ is the rate of the aforementioned exponential killing time, $-a$ is linear drift, $\sigma:=\sqrt{\sigma^2}$ is the Gaussian coefficient and $\Pi(\D x)$ determines the rate at which jumps of size $x\in\mathbb{R}\backslash\{0\}$ occur.
\smallskip

\noindent For each non-zero characteristic exponent (as described above) there exists a unique factorisation (the so-called spatial Wiener--Hopf factorisation) such that 
\begin{equation}
\Psi(\theta) = \Phi^+(-\i\theta) \Phi^-(\i\theta), \qquad \theta\in\mathbb{R},
\label{WHF}
\end{equation}
where  $\Phi^+, \Phi^-$ are Bernstein functions or, in other words, the Laplace exponents of subordinators;  uniqueness being up to multiplying and dividing $\Phi^+$ and $\Phi^-$, respectively, with a strictly positive constant. The factorisation has a physical interpretation in the sense that one may identify $ \Phi^+(\lambda) = - t^{-1}\log \mathbf{E}[{\rm e}^{-\lambda S^+_t}]$, $\lambda\geq 0$, $t>0$, where $(S^+_t, t\geq 0)$,  the increasing ladder heights process, is a subordinator whose range agrees with that of $(\sup_{s\leq t}\xi_s, t\geq 0)$. In parallel to the uniqueness of $\Phi^+$ only up to premultiplication by a strictly positive constant, $S^+$ is only meaningful up to a scaling of the time-axis. The role of $\Phi^-$ is similar to that of $\Phi^+$ albeit in relation to the dual process $-\xi$.
\smallskip

\noindent Vigon's theory of philanthropy \cite{vigon} is arguably a milestone in the theory of  L\'evy processes in that it provides precise conditions through which one may fashion a L\'evy process by defining it through a selected pair $\Phi^+, \Phi^-$,  as opposed to e.g. its transition law, L\'evy measure or characteristic exponent. Following the appearance of a number of new Wiener--Hopf factorisations in \cite{bertoin-yor, cc, lamperti-stable}, this observation was made in \cite{hg, hg-bis}  and explored further  in \cite{meromorphic, ehg, hk}.  
\smallskip

\noindent In particular it is argued in \cite{hg-bis} that a natural choice of Bernstein functions (equivalently subordinators) to fulfill the roles of $\Phi^+, \Phi^-$ are those of the so-called {\it beta class} and take the form 
\begin{equation}\label{betaclass}
\Phi(\lambda) = \frac{\Gamma(\lambda+\beta+\gamma)}{\Gamma(\lambda+\beta)}, \qquad \lambda \geq 0,
\end{equation}
where $0\leq \alpha\leq \beta+\gamma$ and $\gamma\in [0,1]$. When selecting $\Phi^+$ and $\Phi^-$ from the class of beta subordinators within the restrictions permitted by Vigon's theory of philanthropy, the resulting family of L\'evy processes was called the {\it hypergeometric class}. The given name of this class reflects the fact that the associated L\'evy measure of the L\'evy process fulfilling \eqref{WHF}  can be expressed in a quite simple way using the Gauss hypergeometric function (often written  ${_2F_1}$). 
\smallskip

\noindent Selecting $\Phi^+$ and $\Phi^-$ from the class of beta subordinators
not only brings about a degree of tractability to a number of associated problems, but it also brings the hypergeometric class in line with a number of other attractive theories such as that of L\'evy processes with completely monotone Wiener--Hopf factors; cf. \cite{rogers}. As alluded to above, a broader definition for the hypergeometric class was also explored in later papers.
\smallskip

\noindent Our objective here is to consider yet another deepening of the definition of the hypergeometric class of L\'evy processes.
As such, we are interested in understanding the extent to which one may select the factors $\Phi^+, \Phi^-$ from the class of Bernstein functions  taking the form
\begin{equation}
\Phi(\lambda) = \mathtt{B}(\alpha, \beta, \gamma, \delta;\lambda):=\frac{\Gamma(\lambda+\alpha)\Gamma(\lambda+\beta)}{\Gamma(\lambda+\gamma)\Gamma(\lambda+\delta)}, \qquad \lambda \geq 0,
\label{doublebeta}
\end{equation}
for an appropriate configuration of constants $\alpha, \beta, \gamma,\delta$. This will lead to a class of L\'evy processes that we will call the {\it the double hypergeometric class}. More generally, our work will show that it is in principle possible to select the factors $\Phi^+, \Phi^-$ from a class of Bernstein functions taking the form 
\[
\Phi(\lambda) = \prod_{i =1}^n\frac{\Gamma(\lambda+\alpha_i)}{\Gamma(\lambda+\gamma_i)}, \qquad \lambda \geq 0,
\]
again, for appropriate configurations of constants $\alpha_1,\ldots, \alpha_n$ and 
$\gamma_1,\ldots, \gamma_n$ and $n\in\mathbb{N}$. That said, we will stick to the setting of \eqref{doublebeta}.
\smallskip

\noindent The generalisation we pursue in this paper is motivated through a specific example of a new L\'evy process that emerged recently in the literature concerning planar maps whose characteristic exponent is represented as a product of two functions of the form \eqref{doublebeta} for a suitable choice of parameters. In that setting, \cite{budd} introduces a path transformation of a two-sided jumping stable process in which, with a point of issue in $(0,\infty)$, each crossing of the origin is `ricocheted' back into the positive half-line with a positive probability and otherwise killed. The resulting process is a positive self-similar Markov process (pssMp) and accordingly enjoys a representation as a space-time changed L\'evy process via the so-called Lamperti transform. When examining the Lamperti-transform of the ricocheted stable process, for appropriate parameter choices, a double hypergeometric L\'evy process emerges. Then a natural question arises \textit{is the form of the characteristic exponent  of such L\'evy process a proper Wiener-Hopf factorisation}? We make these comments more precise later in this paper at the point that we deal with our second main objective. The latter is to examine the interaction of double hypergeometric L\'evy processes and path transformations of the stable process, both in the context of pssMp but also real-valued self-similar Markov processes.

\smallskip

\noindent The rest of the paper is organised as follows. In the next section we introduce the class of double hypergeometric L\'evy processes and explore their advantages in terms of the Wiener--Hopf factorisation and their integrated exponential functionals.  In Section \ref{section:applications} we briefly remind the reader of how self-similar Markov processes are related to L\'evy processes or, more generally, Markov additive processes. This then allows us in Section~\ref{section:ricocheted} to discuss  how the class of double hypergeometric L\'evy processes appears naturally in the context of  certain types of self-similar Markov processes that are built from a path perturbation of a stable process. The final Section~\ref{section:proofs} is concerned with the proofs of the main results. 

\section{Double beta subordinators and double hypergeometric L\'evy processes}
\noindent Let us start by introducing some notation. As already alluded to above,  ${_2F_1}$ stands for the Gauss hypergeometric function. However, we will, more generally, use ${_pF_q}$ for the generalised hypergeometric function: $$
\pFq{p}{q}{a_1\cdots a_p}{b_1\cdots b_q}{z}
=\sum _{k=0}^{\infty } \frac{\left(a_1\right)_k\ldots \left(a_p\right)_k}{\left(b_1\right)_k\ldots \left(b_q\right)_k} \frac{z^k}{k!},
$$ where $(a)_k=a(a+1)\cdots(a+k-1)$ denotes the rising factorial power and is frequently called the Pochhammer symbol in the theory of special functions. 
\smallskip

\noindent Next, suppose that  $A$ and $B$ are two countably infinite subsets of $\mathbb{R}$, bounded from above and discrete. Then $B$ is said to  interlace with $A$ if $\max A\geq \max B$ and if for the order-preserving enumerations $(a_n)_{n\in \ZZ_{\leq 0}}$ and $(b_n)_{n\in \ZZ_{\leq 0}}$ of $A$ and $B$, respectively, one has that $a_{n-1}<b_n<a_n$ for all $n\in \ZZ_{\leq 0}$, the nonpositive integers. If $A$ and $B$ interlace, then necessarily they are disjoint.

\smallskip

\noindent Below we give our first  key result of this paper. It identifies a class of subordinators having Laplace transform of the form \eqref{doublebeta}. Before stating it, recall that (i) a real meromorphic function is a meromorphic function that maps the real line excluding the poles into the real line, (ii) a Pick (or Nevanlinna-Pick) function is an analytic map defined on a subset of $\mathbb{C}$ containing the open upper complex half-plane and that maps the latter into itself \cite[Definition~6.7]{bernstein}, and (iii) the non-constant complete Bernstein functions are precisely the Pick functions that are nonnegative on $(0,\infty)$ \cite[Theorem~6.9]{bernstein} (the latter omits the qualification ``non-constant'', but without it, it is clearly false). 
\begin{theorem}\label{proposition}
Let $\{\alpha,\beta,\gamma,\delta\}\subset (0,\infty)$, assume $\{\alpha-\beta,\gamma-\delta,\gamma-\alpha,\delta-\alpha, \gamma-\beta,\delta-\beta\}\cap \ZZ=\emptyset$, and suppose that  $\{-\alpha,-\beta\}+\ZZ_{\leq 0}$ interlaces with $\{-\gamma,-\delta\}+\ZZ_{\leq 0}$, which, when without loss of generality $\alpha< \beta$ and $\gamma< \delta$, is equivalent to assuming that
 either
\begin{enumerate}[(I)]
 \item\label{charact:i} there is a $k\in \mathbb{N}_0$ with $\gamma+k< \alpha+k< \delta< \beta< \gamma+k+1$, or
\item\label{charact:ii}  there is a $k\in \mathbb{N}$ with $\alpha+k-1< \gamma+k< \beta< \delta< \alpha+k$.
\end{enumerate}
 Then: 
 
 {\tt a)}  $\gamma+\delta<\alpha+\beta<\gamma+\delta+1$; 
 
   {\tt b)} $\mathtt{B}(\alpha, \beta, \gamma, \delta;\cdot)$ defined in \eqref{doublebeta} 
  is the Laplace exponent of a killed, infinite activity, pure jump  subordinator with cemetery state $\infty$, whose L\'evy measure possesses a completely monotone density $f:(0,\infty)\to [0,\infty)$ given by
\begin{align}\label{eq:density}
f(s)&=-\frac{{\rm e}^{-\alpha s}\Gamma(\beta-\alpha)}{\Gamma(\gamma-\alpha)\Gamma(\delta-\alpha)}
\pFq{2}{1}{1+\alpha-\gamma,1+\alpha-\delta}{1+\alpha-\beta}{{\rm e}^{-s}}\notag\\
&\hspace{2cm}-\frac{{\rm e}^{-\beta s}\Gamma(\alpha-\beta)}{\Gamma(\gamma-\beta)\Gamma(\delta-\beta)}
\pFq{2}{1}{1+\beta-\gamma,1+\beta-\delta}{1+\beta-\alpha}{{\rm e}^{-s}}, \quad s\in (0,\infty);
\end{align}

{\tt c)} $\mathtt{B}(\alpha, \beta, \gamma, \delta;\cdot)$ is a non-constant complete  Bernstein function, indeed it is a real meromorphic Pick function; 

{\tt d)} the associated potential measure $u:(0,\infty)\to [0,\infty)$ (whose Laplace transform is given by $1/\mathtt{B}(\alpha, \beta, \gamma, \delta;\cdot)$)  also admits a density which is completely monotone, and it is given by
\begin{align}\label{eq:potential}
u(x)&=
\frac{{\rm e}^{-\gamma x}\Gamma(\delta-\gamma)}{\Gamma(\alpha-\gamma)\Gamma(\beta-\gamma)}
\pFq{2}{1}{1-\alpha+\gamma,1-\beta+\gamma}{1-\delta+\gamma}{{\rm e}^{-x}}\notag\\
&\hspace{2cm}+\frac{{\rm e}^{-\delta x}\Gamma(\gamma-\delta)}{\Gamma(\alpha-\delta)\Gamma(\beta-\delta)}
\pFq{2}{1}{1-\alpha+\delta,1-\beta+\delta}{1-\gamma+\delta}{{\rm e}^{-x}}, \quad x\in (0,\infty). 
\end{align}

\smallskip

\noindent Conversely, when, ceteris paribus, the interlacing property of $\{-\alpha,-\beta\}+\ZZ_{\leq 0}$ with $\{-\gamma,-\delta\}+\ZZ_{\leq 0}$ fails, then the function $\mathtt{B}(\alpha, \beta, \gamma, \delta;\cdot)$ in \eqref{doublebeta}  is not Pick, and so is not a non-constant complete Bernstein function on $(0,\infty)$. 
\smallskip

\noindent Finally, suppose merely $\{\alpha,\beta,\gamma,\delta\}\subset [0,\infty)$, and  either 
\begin{enumerate}[(i)]
 \item\label{charact:i'} there is a $k\in \mathbb{N}_0$ with $\gamma+k\leq\alpha+k\leq\delta\leq\beta\leq\gamma+k+1$, or
\item\label{charact:ii'}  there is a $k\in \mathbb{N}$ with $\alpha+k-1\leq\gamma+k\leq\beta\leq\delta\leq\alpha+k$, or
\item\label{charact:iii'} one of the preceding holds once the substitutions  $\alpha\leftrightarrow \beta$ and/or $\gamma\leftrightarrow\delta$ have been effected.
\end{enumerate}
Then  $\mathtt{B}(\alpha, \beta, \gamma, \delta;\cdot)$  of  \eqref{doublebeta} is still a complete Bernstein function.
\end{theorem}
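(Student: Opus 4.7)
The plan is to reduce to the strict-inequality case already handled in parts~{\tt a)}--{\tt d)} by an approximation-and-closure argument. Since $\mathtt{B}(\alpha,\beta,\gamma,\delta;\cdot)$ is manifestly symmetric in $(\alpha,\beta)$ and in $(\gamma,\delta)$, case (iii') reduces immediately to (i') or (ii'), and it suffices to treat those two.

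Given $(\alpha,\beta,\gamma,\delta)\in [0,\infty)^4$ satisfying (i') (respectively (ii')) for some $k\in \NN_0$ (resp.\ $k\in \NN$), I would construct a sequence $(\alpha_n,\beta_n,\gamma_n,\delta_n)_{n\in\NN}\subset (0,\infty)^4$ converging to $(\alpha,\beta,\gamma,\delta)$ for which the strict condition (I) (resp.\ (II)) and the non-integrality requirement $\{\alpha_n-\beta_n,\gamma_n-\delta_n,\gamma_n-\alpha_n,\delta_n-\alpha_n,\gamma_n-\beta_n,\delta_n-\beta_n\}\cap \ZZ=\emptyset$ are both in force. Such a sequence exists because the strict-inequality region is open and accessible from every boundary point of the weak region, while the integer-difference locus is a countable union of affine hyperplanes that a generic small perturbation misses. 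For each such $n$, the first part of the theorem certifies that $\mathtt{B}(\alpha_n,\beta_n,\gamma_n,\delta_n;\cdot)$ is a complete Bernstein function; invoking the standard closure of the class of complete Bernstein functions under pointwise convergence on $(0,\infty)$, combined with the joint continuity of $\Gamma$ away from its poles, would then close the argument.

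The main obstacle is the genuinely degenerate boundary where some of $\alpha,\beta,\gamma,\delta$ vanish, because then a $\Gamma$ factor in the numerator acquires a pole at $\lambda=0$. I would first read off from the inequality chains that such a zero in the numerator is forced to be matched by one in the denominator: in (i') the chain $0\leq\gamma\leq\alpha$ gives $\gamma=0$ whenever $\alpha=0$, and in (ii') the chain $\gamma+k\leq\beta\leq\delta\leq\alpha+k$ similarly forces $\gamma=0$ when $\alpha=0$; by symmetry $\beta=0$ entails $\delta=0$. The offending $\Gamma$-factors therefore cancel, and $\mathtt{B}$ collapses to a product of at most two beta-class factors of the form \eqref{betaclass} already known to be complete Bernstein. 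Choosing the approximating sequence so that $\alpha_n-\gamma_n$ and/or $\beta_n-\delta_n$ tend to $0$ at a controlled rate extends convergence through $\lambda=0$ via the asymptotic $\Gamma(x)\sim 1/x$ as $x\downarrow 0$; the fully-degenerate case $\mathtt{B}(0,0,0,0;\cdot)\equiv 1$ is complete Bernstein by inspection.
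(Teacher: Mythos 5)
Your proposal addresses only the final assertion of the theorem (that conditions (i'), (ii'), (iii') over $[0,\infty)^4$ imply $\mathtt{B}(\alpha,\beta,\gamma,\delta;\cdot)$ is complete Bernstein). For that fragment your approach---approximate by a sequence $(\alpha_n,\beta_n,\gamma_n,\delta_n)\in(0,\infty)^4$ satisfying the strict interlacing and non-integer-difference conditions, apply the already-established part of the theorem, and invoke closure of the complete Bernstein class under pointwise limits on $(0,\infty)$---is exactly what the paper does (it cites \cite[Corollary~7.6(ii)]{bernstein} in one line). Your extra worry about poles at $\lambda=0$ when some parameter vanishes is unnecessary: pointwise convergence is only needed on $(0,\infty)$, where $\lambda+\alpha_n$, $\lambda+\beta_n$, $\lambda+\gamma_n$, $\lambda+\delta_n$ stay bounded away from the poles of $\Gamma$ for $n$ large, so convergence of $\mathtt{B}(\alpha_n,\beta_n,\gamma_n,\delta_n;\lambda)$ to $\mathtt{B}(\alpha,\beta,\gamma,\delta;\lambda)$ is immediate by continuity; there is no need to invoke $\Gamma(x)\sim 1/x$ or to argue about cancelling numerator/denominator poles.

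The genuine gap is that you have not proved the theorem. Parts {\tt a)}--{\tt d)} and the converse statement are the substance, and you treat them as ``already handled'' rather than as things to be established. What is missing: the equivalence of the interlacing hypothesis with (I)/(II) together with the arithmetic deduction $\gamma+\delta<\alpha+\beta<\gamma+\delta+1$ in {\tt a)}; the identification in {\tt c)} of $\mathtt{B}$ as a real meromorphic Pick function, which the paper obtains by writing $\mathtt{B}$ via Euler's infinite product and applying the characterisation of real meromorphic Pick functions in terms of interlacing zeros and poles (\cite[Theorem 27.2.1]{levin}); the construction in {\tt b)} of the L\'evy density $f$ by matching the Mittag--Leffler-type pole expansion of $\mathtt{B}$ against the expansion of $\frac{\Gamma(\alpha)\Gamma(\beta)}{\Gamma(\gamma)\Gamma(\delta)}+\int_0^\infty(1-e^{-zs})f(s)\,\dd s$, and the verification that $\mathtt{B}$ corresponds to a killed, pure-jump, infinite-activity subordinator via Stirling asymptotics; the computation of the potential density $u$ in {\tt d)} via the Stieltjes representation of $1/\mathtt{B}$ and identification of residues; and the converse, which requires showing that failure of interlacing breaks the Pick property (again by \cite[Theorem 27.2.1]{levin}). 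None of this is present, and there is no indication of how you would derive the explicit ${}_2F_1$ formulae in \eqref{eq:density} and \eqref{eq:potential}.
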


\noindent We say that a subordinator $S$ having the Laplace exponent $\mathtt{B}(\alpha, \beta, \gamma, \delta;\cdot)$ as in \eqref{doublebeta} is a {\it double beta subordinator}.  The class of quadruples $(\alpha,\beta,\gamma,\delta)\in [0,\infty)^4$ satisfying the conditions \ref{charact:i'}-\ref{charact:ii'}-\ref{charact:iii'}  of  Theorem \ref{proposition} is denoted $\GG$.  Note,  we obtain the beta subordinators of \eqref{betaclass} as a special case of double beta subordinators in the class   $\GG$ by taking  $\alpha=\gamma=0$ and substituting $\beta\rightarrow \beta+\gamma $, $\delta\rightarrow \beta$ in  \eqref{doublebeta} (it satisfies  \ref{charact:i'} with $k=0$). The class of those  quadruples $(\alpha,\beta,\gamma,\delta)\in (0,\infty)^4$ that satisfy the same conditions  \ref{charact:i'}-\ref{charact:ii'}-\ref{charact:iii'} except that $\leq$ is replaced by $<$ is denoted $\GG^\circ$; it identifies \emph{precisely} those quadruples $(\alpha,\beta,\gamma,\delta)\in (0,\infty)^4$ for which $\{\alpha-\beta,\gamma-\delta,\gamma-\alpha,\delta-\alpha, \gamma-\beta,\delta-\beta\}\cap \ZZ=\emptyset$ and that make  $\mathtt{B}(\alpha, \beta, \gamma, \delta;\cdot)$  of  \eqref{doublebeta} into a non-constant complete Bernstein function.

\smallskip 
\noindent We should point out that our focus in Theorem~\ref{proposition} on double beta subordinators corresponding to \eqref{doublebeta} with $(\alpha,\beta,\gamma,\delta)\in \GG^\circ$, rather than more generally $(\alpha,\beta,\gamma,\delta)\in \GG$, is a matter of convenience: to describe the resulting subordinators for all the possible constellations that $\GG$ admits would be prohibitive in scope. For instance, as already remarked, $\GG$ includes the beta subordinators, but also the trivial (zero) subordinators killed at an indpendent exponential random time, etc. Besides, the conditions ``$\alpha,\beta,\gamma,\delta>0$ and  $\{\alpha-\beta,\gamma-\delta,\gamma-\alpha,\delta-\alpha, \gamma-\beta,\delta-\beta\}\cap \ZZ=\emptyset$'' serve to ensure that there are no ``cancellations or reductions'' of the gamma factors in \eqref{doublebeta}, i.e. $\mathtt{B}(\alpha, \beta, \gamma, \delta;\cdot)$ is ``truly'' a quotient of four gammas. On the other hand, our insistance on the interlacing property, beyond merely demanding that $\alpha,\beta,\gamma,\delta>0$ and  $\{\alpha-\beta,\gamma-\delta,\gamma-\alpha,\delta-\alpha, \gamma-\beta,\delta-\beta\}\cap \ZZ=\emptyset$, i.e. on the  ``non-constant complete Bernstein'' property, is more than just convenience: it is not clear to us how to establish the Bernstein property of   $\mathtt{B}(\alpha, \beta, \gamma, \delta;\cdot)$  from  \eqref{doublebeta} when the complete Bernstein property fails (if this can in fact happen, see the comments below Proposition \ref{thm:wh-for-pssmp-ricocheted}). \\

\noindent From Vigon's theory of philanthropy \cite{vigon}, and in particular the fact that double beta subordinators corresponding to $\GG$ are completely monotone, thereby having associated L\'evy measures that are absolutely continuous with non-increasing densities,  we can now define a new class of L\'evy processes from the double beta subordinator class as follows; see Section 6.6. of \cite{kyprianou}.

\begin{corollary}\label{corollary}
Let $\{(\alpha,\beta,\gamma,\delta),(\hat{\alpha},\hat{\beta},\hat{\gamma},\hat{\delta})\}\subset \GG$. 
Then there exists a L\'evy process with characteristic exponent $\Psi$ that respects the spatial Wiener--Hopf factorisation in the form 
\[
\pushQED{\qed} 
\Psi(\theta)=\mathtt{B}(\alpha, \beta, \gamma, \delta;-i\theta)\mathtt{B}(\hat\alpha, \hat\beta, \hat\gamma, \hat\delta; i\theta), \qquad \theta\in\mathbb{R}.\qedhere
\popQED
\]
 \label{WHFcor}
\end{corollary}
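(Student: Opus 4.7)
The proof proceeds by a direct invocation of Vigon's theory of philanthropy \cite{vigon}, in the form recalled in Section 6.6 of \cite{kyprianou}. The relevant sufficient criterion reads as follows: given two Laplace exponents $\Phi^+,\Phi^-$ of (possibly killed) subordinators whose Lévy measures both admit non-increasing densities on $(0,\infty)$, there exists a L\'evy process $\xi$ whose characteristic exponent factorises as
\[
\Psi(\theta)=\Phi^+(-\i\theta)\Phi^-(\i\theta),\qquad \theta\in\mathbb{R},
\]
the Wiener--Hopf factors of $\xi$ being $\Phi^+$ and $\Phi^-$ (up to the usual normalisation of the ladder-time scale). The plan is therefore reduced to verifying that every double beta subordinator corresponding to a quadruple in $\GG$ meets this non-increasing-density hypothesis.

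To this end I would appeal to the closing statement of Theorem \ref{proposition}, which asserts that $\mathtt{B}(\alpha,\beta,\gamma,\delta;\cdot)$ is a complete Bernstein function for every $(\alpha,\beta,\gamma,\delta)\in\GG$, together with the standard correspondence (see \cite[Chapter 6]{bernstein}) identifying complete Bernstein functions with Laplace exponents of subordinators whose L\'evy measures are absolutely continuous with completely monotone densities. Since completely monotone functions are in particular non-increasing, this delivers exactly the hypothesis required by the philanthropy criterion for both $\Phi^+=\mathtt{B}(\alpha,\beta,\gamma,\delta;\cdot)$ and $\Phi^-=\mathtt{B}(\hat{\alpha},\hat{\beta},\hat{\gamma},\hat{\delta};\cdot)$. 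One also needs to observe that the expressions $\mathtt{B}(\alpha,\beta,\gamma,\delta;-\i\theta)$ and $\mathtt{B}(\hat{\alpha},\hat{\beta},\hat{\gamma},\hat{\delta};\i\theta)$ make sense for real $\theta$: any complete Bernstein function extends analytically to $\mathbb{C}\setminus(-\infty,0]$, and hence to the imaginary axis, so the product on the right-hand side of the claimed factorisation is well defined.

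The main (and only mild) obstacle is to confirm that the philanthropy criterion tolerates the presence of killing (the quantities $\mathtt{B}(\alpha,\beta,\gamma,\delta;0)=\Gamma(\alpha)\Gamma(\beta)/(\Gamma(\gamma)\Gamma(\delta))$ and its hatted analogue are generically strictly positive, so both $\Phi^+$ and $\Phi^-$ will typically correspond to killed subordinators), as well as the boundary configurations in $\GG\setminus\GG^\circ$ in which the quotient of gammas may reduce to a beta subordinator \eqref{betaclass} or to a pure killing term; in each such degenerate case the density is either known to be completely monotone or the L\'evy measure is identically zero, so that philanthropy is trivially satisfied. All analytic difficulty having already been absorbed into Theorem \ref{proposition}, the remainder of the argument is merely a matter of quoting the criterion.
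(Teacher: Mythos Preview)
Your proof is correct and follows exactly the paper's own reasoning: the text immediately preceding the corollary already explains that, by Theorem~\ref{proposition}, double beta subordinators for parameters in $\GG$ are complete Bernstein functions, hence have L\'evy measures with non-increasing (indeed completely monotone) densities, and then Vigon's philanthropy criterion (Section~6.6 of \cite{kyprianou}) applies. The paper accordingly states the corollary with an embedded \verb|\qed| and no separate proof; your additional remarks on killing and degenerate cases are reasonable but not strictly needed.
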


\noindent In light of the above, we call a L\'evy process, $\xi$, of the type introduced in the preceding corollary a {\it double hypergeometric L\'evy process}. 
 The following proposition gathers some basic properties of our new class of double hypergeometric L\'evy processes (and accordingly justifies the name ``double hypergeometric'').
\begin{proposition}\label{doubledensity}
Let $\{(\alpha,\beta,\gamma,\delta),(\hat{\alpha},\hat{\beta},\hat{\gamma},\hat{\delta})\}\subset \GG$ and $\xi$ be a double hypergeometric L\'evy process with these parameters as in Corollary~\ref{corollary}. We have the following assertions. 
\begin{enumerate}[(i)]
\item\label{further:characteristics} Assume even $\{(\alpha,\beta,\gamma,\delta),(\hat{\alpha},\hat{\beta},\hat{\gamma},\hat{\delta})\}\subset \GG^\circ$.  Then $\xi$, once stripped away of its killing, is a meromorphic L\'evy process in the terminology of \cite{meromorphic}. The L\'evy measure $\Pi$ of $\xi$ has a density $\pi$ that is given by 
\begin{align*}
\pi(x) =& -\frac{\Gamma (\alpha + \hat{\alpha})\Gamma(\alpha +\hat{\beta}) \Gamma(\beta-\alpha){\rm e}^{-\alpha x} }{\Gamma (\alpha + \hat{\gamma}) \Gamma(\alpha + \hat{\delta}) \Gamma(\gamma-\alpha)\Gamma(\delta-\alpha)}
\pFq{4}{3}{\alpha+\hat{\alpha},\alpha+\hat{\beta},1+\alpha-\gamma,1+\alpha-\delta}{1+\alpha-\beta,\alpha+\hat{\gamma},\alpha+ \hat{\delta}}{{\rm e}^{-x}}\\
&
-\frac{\Gamma (\beta + \hat{\beta})\Gamma(\beta +\hat{\alpha}) \Gamma(\alpha-\beta){\rm e}^{-\beta x} }{\Gamma (\beta + \hat{\gamma}) \Gamma(\beta + \hat{\delta}) \Gamma(\gamma-\beta)\Gamma(\delta-\beta)}
\pFq{4}{3}{\beta+\hat{\beta},\beta+\hat{\alpha},1+\beta-\gamma,1+\beta-\delta}{1+\beta-\alpha,\beta+\hat{\gamma},\beta+ \hat{\delta}}{{\rm e}^{-x}}
\end{align*}
for $x\in (0,\infty)$. For $x\in (-\infty,0)$ it is the same, except that the quantities with and without a hat get interchanged for their respective counterparts and $-x$ replaces $x$.
\item\label{further:characteristics:Gaussian} There is  a Gaussian component iff $\alpha+\beta=\gamma+\delta+1$ and $\hat{\alpha}+\hat{\beta}=\hat{\gamma}+\hat{\delta}+1$, in which case the diffusion coefficient $\sigma^2=2$.
\item\label{further:characteristics:lifetime}  Recall that we may assume without loss of generality that  $\gamma\leq\delta$, $\alpha\leq\beta$,  $\hat{\gamma}\leq\hat{\delta}$, $\hat{\alpha}\leq\hat{\beta}$ and assume (for simplicity\footnote{If $\delta=0$ ($\hat{\delta}=0$), then automatically $\alpha=\gamma=0$ ($\hat{\alpha}=\hat{\gamma}=0$).}) that $\delta\hat{\delta}>0$. Then $\xi$ has infinite lifetime iff: either $ \gamma=\hat{\gamma}=0<\alpha\hat{\alpha}$,  in which case $\xi$ oscillates; or
$\gamma=0<\hat{\gamma}\alpha$, in which case $\xi$ drifts to $\infty$; or
 $\hat{\gamma}=0<\gamma\hat{\alpha}$, in which case $\xi$ drifts to $-\infty$. 
 
\end{enumerate}
\end{proposition}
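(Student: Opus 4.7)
My plan is to dispatch the three items in turn.

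For (i), I would apply Vigon's amicable equation (Section~6.6 of \cite{kyprianou}), which expresses the L\'evy density of $\xi$ on $(0,\infty)$ as $\pi(x)=\int_0^\infty f^+(x+y)u^-(y)\,\dd y$, where $f^+$ is the L\'evy density of the ascending ladder subordinator with Laplace exponent $\mathtt{B}(\alpha,\beta,\gamma,\delta;\cdot)$ and $u^-$ is the potential density of the descending ladder subordinator with Laplace exponent $\mathtt{B}(\hat\alpha,\hat\beta,\hat\gamma,\hat\delta;\cdot)$. Both are already in closed form via \eqref{eq:density} and \eqref{eq:potential} of Theorem~\ref{proposition}. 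After the substitution $t=\mathrm{e}^{-y}$ the convolution becomes a sum of four integrals of the type $\int_0^1 t^{c-1}\,{}_2F_1(\cdot,\cdot;\cdot;\mathrm{e}^{-x}t)\,{}_2F_1(\cdot,\cdot;\cdot;t)\,\dd t$, each of which collapses to a ${}_4F_3$ at $\mathrm{e}^{-x}$ by standard Euler-type integral identities for hypergeometric functions; grouping the four contributions by their outer exponential prefactors $\mathrm{e}^{-\alpha x}$ and $\mathrm{e}^{-\beta x}$ produces exactly the two ${}_4F_3$-terms in the statement. By inspection, $\pi$ is then an absolutely convergent sum of the form $\sum_{n\in \mathbb{N}_0}\bigl[c_n^{\alpha}\mathrm{e}^{-(\alpha+n)x}+c_n^{\beta}\mathrm{e}^{-(\beta+n)x}\bigr]$ of real exponentials, and the analogous statement on $(-\infty,0)$ places $\xi$ (stripped of its killing) in the meromorphic class of \cite{meromorphic}.

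For (ii), the Gaussian coefficient of any L\'evy process satisfies $\sigma^2=2\lim_{\theta\to+\infty}\Psi(\theta)/\theta^2$ (the non-Gaussian contribution to $\mathrm{Re}\,\Psi$ is $o(\theta^2)$ by dominated convergence, and the imaginary part is $O(\theta)$). Stirling's formula, in the form $\Gamma(z+a)/\Gamma(z+b)=z^{a-b}(1+O(1/|z|))$ uniformly in closed sectors omitting $(-\infty,0]$, yields
\[
\mathtt{B}(\alpha,\beta,\gamma,\delta;\lambda)=\lambda^{p}(1+O(1/|\lambda|)),\qquad p:=\alpha+\beta-\gamma-\delta,
\]
and analogously $\hat p:=\hat\alpha+\hat\beta-\hat\gamma-\hat\delta$, so $\Psi(\theta)\sim \mathrm{i}^{\hat p-p}\theta^{p+\hat p}$ as $\theta\to+\infty$. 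The interlacing conditions defining $\GG$ confine $p,\hat p$ to $[0,1]$, whence $p+\hat p=2$ iff $p=\hat p=1$, i.e.\ iff $\alpha+\beta=\gamma+\delta+1$ and $\hat\alpha+\hat\beta=\hat\gamma+\hat\delta+1$; in that case $\mathrm{i}^{\hat p-p}=\mathrm{i}^0=1$, delivering $\sigma^2=2$.

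For (iii), $\xi$ has infinite lifetime iff its killing rate $q=\Psi(0)=\mathtt{B}(\alpha,\beta,\gamma,\delta;0)\mathtt{B}(\hat\alpha,\hat\beta,\hat\gamma,\hat\delta;0)$ vanishes, i.e.\ iff at least one factor does. Under the WLOG ordering and $\delta,\hat\delta>0$, a direct $\lambda\downarrow 0$ calculation (exploiting the simple pole of $\Gamma$ at $0$) shows the first factor is zero iff $\gamma=0$ and $\alpha>0$, and strictly positive otherwise; symmetrically for the second factor. The three cases are then separated by the classical Wiener--Hopf dichotomy: an unkilled L\'evy process drifts to $+\infty$, drifts to $-\infty$, or oscillates exactly according to whether $\Phi^-(0)>0$, $\Phi^+(0)>0$, or $\Phi^+(0)=\Phi^-(0)=0$, where $\Phi^{\pm}$ are the Wiener--Hopf factors of~\eqref{WHF}. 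The main obstacle is the bookkeeping for (i): verifying that the four Vigon cross-products of $f^+$ and $u^-$ pair into precisely two ${}_4F_3$'s with the correct upper/lower parameter lists takes some care, while (ii) and (iii) reduce to standard L\'evy-process facts together with Stirling's formula and fluctuation theory.
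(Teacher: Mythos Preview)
Your arguments for parts (ii) and (iii) are essentially what the paper does: Stirling asymptotics for the Gaussian coefficient, and the Wiener--Hopf trichotomy on $\Phi^{\pm}(0)$ for the lifetime/long-run behaviour.

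For part (i), however, there is a genuine gap. The formula you invoke, $\pi(x)=\int_0^\infty f^+(x+y)\,u^-(y)\,\dd y$, is Vigon's \emph{\'equation amicale} run in the wrong direction: that identity expresses the ladder L\'evy tail in terms of the process L\'evy tail, i.e.\ $\bar\nu_H(x)=\int_{[0,\infty)}\bar\Pi(x+y)\,\hat U(\dd y)$, not the other way round. If you carry your convolution through on the exponential series for $f^+$ and $u^-$, the coefficient of $e^{-(\alpha+k)x}$ comes out as $A_k\,\hat u^{-}(\alpha+k)=A_k/\Phi^-(\alpha+k)$, whereas the stated ${}_4F_3$ has coefficient $A_k\,\Phi^-(\alpha+k)$; already in the toy case $\Phi^+(\lambda)=(\lambda+a)/(\lambda+b)$, $\Phi^-(\lambda)=(\lambda+\hat a)/(\lambda+\hat b)$ your formula yields $(b-a)\frac{b+\hat b}{b+\hat a}e^{-bx}$ while the correct density is $(b-a)\frac{b+\hat a}{b+\hat b}e^{-bx}$. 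The correct \emph{\'equation amicale invers\'ee} convolves $f^+$ against the L\'evy measure $\nu^-$ of the descending ladder (with compensation and a $\hat q f^+$ term), not against $u^-$. Even with the right formula in hand, the assertion that the integral of a product of two ${}_2F_1$'s ``collapses to a ${}_4F_3$ by standard Euler-type identities'' is not standard: a priori one gets a double sum (a Kamp\'e de F\'eriet--type object), and the collapse to a single series has to be argued.

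The paper sidesteps all of this by first establishing that $\xi$ (modulo killing) is meromorphic in the sense of \cite{meromorphic}, via the infinite-product form of $\psi(z)=-\Phi^+(-z)\Phi^-(z)$; this immediately gives $\pi(x)=\sum_n a_n\rho_n e^{-\rho_n x}$ on $(0,\infty)$ with $\rho_n$ ranging over $\{\alpha,\beta\}+\NN_0$ and $a_n\rho_n=-\mathrm{Res}(\psi;\rho_n)=-\Phi^-(\rho_n)\,\mathrm{Res}(\Phi^+;-\rho_n)$. Resumming these residues (using $\Gamma(c-k)=(-1)^k\Gamma(c)/(1-c)_k$) produces the two ${}_4F_3$'s directly, with no integral identities needed.
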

\noindent The exclusion of the parameters corresponding to $\GG\backslash \GG^\circ$ in the statement of \ref{further:characteristics} is not without cause: for instance the functions $1$ and $\mathrm{id}_{[0,\infty)}$ are of the double beta class, but they do not yield a meromorphic L\'evy process, at least not in the sense of \cite{meromorphic}; similarly the expression of \ref{further:characteristics} for the L\'evy density does not hold true across the whole class of double hypergeometric L\'evy processes correspoding to $\GG$.

\smallskip

\noindent In the next section, we shall shortly discuss  examples of how the double hypergeometric L\'evy process emerges in the setting of certain self-similar Markov processes. As we will see in the exposition there, for the theory of the latter processes, an important quantity of interest is the  exponential functional of L\'evy processes. Below we show that it is possible to characterise the exponential functional of double hypergeometric L\'evy processes via an explicit Mellin transform.
Our method is based on the ``verification'' result of \cite[Subsection~8.1]{hg-bis}. In the following, we use $\mathbf{P}$ to denote the law of the double hypergeometric L\'evy process.

\begin{proposition}\label{proposition:mellin}
Let $\{(\alpha,\beta,\gamma,\delta),(\hat{\alpha},\hat{\beta},\hat{\gamma},\hat{\delta})\}\subset \GG$ and   $\xi$ be double hypergeometric L\'evy process with these parameters as in Corollary~\ref{corollary}.   Assume $\hat{\gamma}\leq \hat{\delta}$, $\hat{\alpha}\leq \hat{\beta}$ and $0<\hat{\gamma}<\hat{\alpha}$ (in fact the very last condition is automatic if even $(\hat{\alpha},\hat{\beta},\hat{\gamma},\hat{\delta})\in \GG^\circ$, while the first two are without loss of generality).  Then, for a given ${c}\in (0,\infty)$, with $\zeta$ being the lifetime of $\xi$, for $s\in \mathbb{C}$ with $\Re(s)\in (0,1+\hat{\gamma}{c})$, 
\footnotesize
\begin{equation}\label{eq:mellin}
\mathbf{E}\left[\left(\int_0^\zeta {\rm e}^{-\xi_t/{c}}dt\right)^{s-1}\right]=C\Gamma(s)\frac{G({c} \gamma+s;{c})G({c} \delta+s;{c})}{G({c}\alpha+s;{c})G({c} \beta+s;{c})}\frac{G(1+{c}\hat{\alpha}-s;{c})G(1+{c} \hat{\beta}-s;{c})}{G(1+{c} \hat{\gamma}-s;{c})G(1+{c} \hat{\delta}-s;{c})},
\end{equation}\normalsize
where $C$ is a normalization constant such that the left- and right-hand side agree at  $s= 1$ (i.e. both are equal to unity), and $G$ is Barnes' double gamma function (see \cite[p. 121]{hg-bis},  and further references therein,  for its definition and basic properties).
\end{proposition}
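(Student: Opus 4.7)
The plan is to invoke the ``verification'' procedure of \cite[Subsection~8.1]{hg-bis}: for a killed L\'evy process, a meromorphic function $M$ coincides with the Mellin transform of its exponential functional on a given vertical strip provided (a) $M$ obeys the Carmona--Petit--Yor functional equation associated with the Laplace exponent of that process, (b) $M$ is meromorphic with appropriate decay along vertical lines in the strip, and (c) $M$ has the correct normalisation at $s=1$. For the scaled process $\xi/c$, whose Laplace exponent at $s$ is
\[
\Psi(is/{c})=\mathtt{B}(\alpha,\beta,\gamma,\delta;s/{c})\,\mathtt{B}(\hat{\alpha},\hat{\beta},\hat{\gamma},\hat{\delta};-s/{c}),
\]
the equation to be verified is $M(s+1)=s\,M(s)/\Psi(is/{c})$, and I take as candidate the right-hand side of \eqref{eq:mellin}.

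The algebraic heart of the argument is the verification of this functional equation, which rests only on the basic $1$-shift recursion for Barnes' double gamma function, according to which $G(z+1;{c})/G(z;{c})$ is, up to an explicit normalising constant, a single $\Gamma(z/{c})$-factor. Forming $M(s+1)/M(s)$, the four ``unhatted'' $G$-ratios collapse to
\[
\frac{\Gamma(\gamma+s/{c})\,\Gamma(\delta+s/{c})}{\Gamma(\alpha+s/{c})\,\Gamma(\beta+s/{c})}=\frac{1}{\mathtt{B}(\alpha,\beta,\gamma,\delta;s/{c})},
\]
the four ``hatted'' $G$-ratios similarly to $1/\mathtt{B}(\hat{\alpha},\hat{\beta},\hat{\gamma},\hat{\delta};-s/{c})$, and $\Gamma(s+1)/\Gamma(s)=s$ supplies the numerator, producing the desired ratio. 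The multiplicative constants that accompany each application of the recursion are seen to cancel in the product because the sum of the shifts in the numerator matches that in the denominator. The constant $C$ is then fixed by $M(1)=1$.

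Analyticity of $M$ in the strip $0<\Re(s)<1+{c}\hat{\gamma}$ is immediate once one recalls the pole/zero profile of the ingredients: $G(\cdot;{c})$ is entire with zeros only at the lattice points $\{-n-m{c}:n,m\in \mathbb{N}_0\}$, and $\Gamma$ has poles only at non-positive integers. Under the hypotheses $\alpha,\beta,\gamma,\delta>0$ and $0<\hat{\gamma}<\hat{\alpha}\leq\hat{\beta}$, $\hat{\gamma}\leq\hat{\delta}$, all potentially dangerous poles and zeros lie outside the strip, and the right endpoint $1+{c}\hat{\gamma}$ is placed precisely at the first zero of $G(1+{c}\hat{\gamma}-s;{c})$ in the denominator, which is exactly why this is the strip's width.

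The principal technical obstacle will be the vertical-decay condition required by \cite[Subsection~8.1]{hg-bis}. This relies on the standard Stirling-type asymptotics for $G(\sigma+it;{c})$ as $|t|\to\infty$ with $\sigma$ fixed, and requires careful bookkeeping to see that, because the eight $G$-factors enter in balanced pairs (four in the numerator and four in the denominator, the linear combinations of parameters in their arguments summing symmetrically), the leading $t^2\log|t|$, $t^2$, $t\log|t|$ and $t$ contributions all cancel, leaving only a polynomial-in-$|t|$ multiple of the exponential decay carried by $|\Gamma(\sigma+it)|$---enough to guarantee absolute convergence of the inverse Mellin transform and the applicability of the verification result. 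This final accounting is entirely parallel to the analogous calculation performed in \cite[Section~8]{hg-bis} for the classical hypergeometric L\'evy class, which is the template I intend to follow.
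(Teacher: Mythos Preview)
Your overall strategy coincides with the paper's: both appeal to the verification result \cite[Proposition~2]{hg-bis}, and your treatment of the functional equation via the $G$-recursion and of analyticity in the strip is essentially correct. Two points in your sketch need repair, however.

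First, the vertical-growth condition required by \cite[Proposition~2]{hg-bis} is not an \emph{upper} bound on $|\mathcal{M}|$ (``integrability of the inverse Mellin transform'') but a \emph{lower} bound: one must show $|\mathcal{M}(x+iy)|^{-1}e^{-2\pi|y|}\to 0$ uniformly in $x$, i.e.\ that $\mathcal{M}$ does not decay too fast. Second, your claim that the eight $G$-factors are so symmetrically balanced that the linear-in-$|t|$ contributions cancel is not correct. Using $\ln|G(p+iy;c)/G(q+iy;c)|=-\pi(p-q)|y|/(2c)+o(|y|)$ (Lemma~1 of \cite{hg-bis}) together with Stirling for $\Gamma$, one actually obtains
\[
\ln|\mathcal{M}(x+iy)|=-\frac{\pi}{2}\Bigl(1+\tfrac{1}{2}\bigl(\gamma+\delta-\alpha-\beta+\hat{\alpha}+\hat{\beta}-\hat{\gamma}-\hat{\delta}\bigr)\Bigr)|y|+o(|y|),
\]
so the $G$-block contributes a non-trivial linear term in general. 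The required condition then reduces to checking that the bracketed coefficient is strictly less than $4$, which follows from the inequalities $\gamma+\delta\leq\alpha+\beta$ and $\hat{\alpha}+\hat{\beta}\leq\hat{\gamma}+\hat{\delta}+1$ built into $\GG$. This is the computation the paper carries out.

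Finally, you should record explicitly that Cram\'er's condition holds: $\psi_c(-\hat{\gamma}c)=\psi(-\hat{\gamma})=0$ with $\psi_c$ finite on $(-\hat{\alpha}c,0)$. This is one of the hypotheses of \cite[Proposition~2]{hg-bis}, and it is precisely where the assumption $0<\hat{\gamma}<\hat{\alpha}$ (and the fact that $\xi$ is killed or drifts to $+\infty$) is used.
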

\begin{remark}\label{remark:mellin}
In fact in the proof only the functional form of the characteristic exponent of $\xi$, that $\xi$ is killed or else drifting to $\infty$, and the inequality $\gamma+\delta-\alpha-\beta+\hat{\alpha}+\hat{\beta}-\hat{\gamma}-\hat{\delta}<6$ will be used. Hence, if it is known a priori that a possibly killed L\'evy process has the Laplace exponent as given in Corollary~\ref{corollary} with $\gamma+\delta-\alpha-\beta+\hat{\alpha}+\hat{\beta}-\hat{\gamma}-\hat{\delta}<6$, and that it is killed or drifts to $\infty$, then the condition ``$\{(\alpha,\beta,\gamma,\delta),(\hat{\alpha},\hat{\beta},\hat{\gamma},\hat{\delta})\}\subset \GG$''  can be dropped in favor of all of the coefficients $\alpha,\ldots,\hat{\delta}$ being just nonnegative. 
\end{remark}

\noindent As indicated in the introduction, before moving to the proofs of the main results in this section, we will first look at how double hypergeometric L\'evy processes appear naturally in the setting of a special family of self-similar Markov processes. Our first step in this direction is to briefly introduce the latter processes.

\section{Self-similar Markov processes}\label{section:applications}

\noindent Let $Y=(Y_s, s\geq 0)$ be a positive self-similar Markov process (pssMp) with self-similarity index $\alpha\in (0,\infty)$ and  probabilities $(\PP_y, y>0)$. Then $Y$ enjoys a bijection with the class of L\'evy processes (as presented in the introduction of this article) via the Lamperti transform.  Lamperti's result gives a bijection between the class of pssMps and the class of L\'evy processes, possibly killed at an independent exponential time with cemetery state $-\infty$, such that, under $\PP_y$, $y>0$, with the convention $\xi_\infty=-\infty$,
\begin{align}
\label{pssMpLamperti}
	Y_t=\exp(\xi_{\varphi_t}),\qquad t\in [0,\infty),
\end{align}
where $\varphi_t=\inf\{s>0: \int_0^s \exp(\alpha \xi_u)\D u >t \}$ and the L\'evy process $\xi$  is started in $\log y$.

\smallskip

\noindent In recent work, e.g. \cite{GV, VG, Kiu, Chy-Lam, rivero, KKPW, doring}, effort has been invested into extending  the theory of pssMp to the setting of $\mathbb{R}$.
 Analogously to Lamperti's representation, for a real self-similar Markov process (rssMp), say $Y$, there is a Markov additive process $((\xi_t,J_t), t\geq 0)$ on $\mathbb{R}\times \{-1,1\}$ such that, again under the convention $\xi_\infty=-\infty$,
\begin{align}\label{LK}
   Y_t =  J_{\varphi_t}\exp\bigl( \xi_{\varphi_t}\bigr) ,\qquad t\in [0,\infty),
\end{align}
where $\varphi_t=\inf\{s>0 : \int_0^s \exp(\alpha \xi_u){\D} u >t\}$ and $(\xi_0,J_0)=(\log |Y_0|,[Y_0])$ with
$$
[z]=\begin{cases} 1 &\mbox{ if } z>0, \\-1 &\mbox{ if }z<0.\end{cases}
$$
The representation \eqref{LK} is known as the Lamperti--Kiu transform. 
Here, by  Markov additive process (MAP), we mean  the regular strong Markov process with probabilities ${\bf P}_{x,i}$, $x\in\mathbb{R}$, $i\in\{-1,1\}$, such  that $(J_t, t\geq 0)$ is a continuous time Markov chain on $\{-1,1\}$ (called the modulating chain) and, for any $i\in \{-1,1\}$ and $t\geq 0$,
\begin{align*}
	&\text{given }\{J_t=i\},\text{ the pair }(\xi_{t+s}-\xi_t,J_{t+s})_{s\geq 0} \text{ is independent of the past up to $t$}\\
	&\qquad\text{ and has the same distribution as }(\xi_s, J_s)_{s\geq 0}\text{ under }{\bf P}_{0,i}.
\end{align*}
If the MAP is killed, then $\xi$ is sent  to the cemetery state $-\infty$. All background results for MAPs that relate to the present article can be found in the appendix of Dereich et al. \cite{doring}.\smallskip

\smallskip

\noindent The analogue of the characteristic exponent of a L\'evy process for the above MAP is provided by the matrix-valued function $\mathbf{\Psi}$ such that  for all $t\in [0,\infty)$, $\{i,j\}\subset \{1,-1\}$, 
\[
\mathbf{E}_{0,i}[{\rm e}^{\i\theta \xi_t};J_t=j]=({\rm e}^{\mathbf{\Psi}(\theta)t})_{ij},\qquad \theta\in \mathbb{R},
\]
and 
\[
\mathbf{\Psi}(\theta)= - \mathrm{diag}(\Psi_1(\theta),\Psi_{-1}(\theta))+Q\circ G(\theta),
\]
where: $Q$ is the generator matrix of $J$; $G$ is the matrix of the characteristic functions of the extra jumps that are inserted into the path of the MAP $\xi$ when $J$ transitions, the diagonal elements of  $G$ being set to unity; $\Psi_1$ and $\Psi_{-1}$ are the characteristic exponents of the L\'evy processes that govern the evolution of the MAP component $\xi$ when it is in states $1$ and $-1$, respectively. The symbol $\circ$ denoted element-wise multiplication (Hadamard product). 

\smallskip

\noindent The mechanism behind the Lamperti--Kiu representation is thus simple. The modulation $J$ governs the sign of $X$ and, on intervals of time for which there is no change in sign, the Lamperti--Kiu representation effectively plays the role of the  Lamperti representation of a pssMp, multiplied with $-1$ if $X$ is negative. In a sense, the MAP formalism gives a concatenation of signed Lamperti representations between times of sign change.
	Typically one can assume the Markov chain $J$ to be irreducible as otherwise the corresponding self-similar Markov process only switches signs at most once and can therefore be treated using the theory of  pssMp.

\smallskip


\section{Richocheted stable processes}\label{section:ricocheted}

\noindent
Let us now turn to the setting of stable processes and path perturbations thereof that fall under the class of self-similar Markov processes and give rise to double hypergeometric L\'evy processes in their Lamperti representation. 
\bigskip

\noindent Let  $(X_t,t\geq 0)$ with probabilities $(\mathbb{P}_x,x\in \mathbb{R})$ (as usual $\mathbb{P}:=\mathbb{P}_0$)  be a \textit{strictly $\alpha$-stable process} (henceforth just written `stable process'), that is to say
an unkilled L\'evy process which
also satisfies the \textit{scaling property}: under $\mathbb{P}$,
for every $c > 0$,
the process $(cX_{t c^{-\alpha}})_{t \ge 0}$
has the same law as $X$.
It is known that $\alpha$ necessarily belongs to $(0,2]$, and the case $\alpha = 2$
corresponds to Brownian motion, which we exclude.
The L\'evy-Khintchine representation of such a process
is as follows:
$\sigma = 0$, 
$\Pi$ is absolutely continuous with density given by 
\[ 
 \pi(x): =  c_+ x^{-(\alpha+1)} \mathbf{1}_{(x > 0)} + c_- {|x|}^{-(\alpha+1)} \mathbf{1}_{(x < 0)},
\qquad  x \in \mathbb{R},
\]
where $c_+,\, c_- \ge 0$.

\smallskip

\noindent
For consistency with the literature that we shall appeal to in this article,
we shall always parametrise our $\alpha$-stable process such that 
\[ c_+ = \Gamma(\alpha+1) \frac{\sin(\pi \alpha \rho)}{\pi} \quad \text{and} \quad
  c_- = \Gamma(\alpha+1) \frac{\sin(\pi \alpha \hat\rho)}{\pi },
  \]
where
$\rho = \mathbb{P}(X_t \ge 0)$ is the positivity parameter, 
and $\hat\rho = 1-\rho$.  Note when $\alpha=1$, then $c_+=c_-$. Moreover,   we may also identify the exponent as taking the form 
\begin{equation}\label{Psi_alpha_rho_parameterization}
\Psi(\theta) = |\theta|^\alpha ({\rm e}^{\pi\i\alpha(\frac{1}{2} -\rho)} \mathbf{1}_{(\theta>0)} + {\rm e}^{-\pi\i\alpha (\frac{1}{2} - \rho)}\mathbf{1}_{(\theta<0)}), \qquad \theta\in\mathbb{R}.
\end{equation}
 
 \smallskip

 \noindent With this normalisation, we take the point of view that the class of stable processes is
parametrised by $\alpha$ and $\rho$; the reader will note that
all the quantities above can be written in terms of these parameters.
We shall restrict ourselves a little further within this class
by excluding the possibility of having only one-sided jumps. In particular, this rules out the possibility that $X$ is a subordinator or the negative of a subordinator, which occurs when $\alpha\in(0,1)$ and either $\rho =1$ or $0$. 
 \smallskip

 \noindent For this class of two-sided jumping stable processes, it is well known that, when  $\alpha\in(0,1]$, the process never hits the origin (irrespective of the point of issue) and $\lim_{t\to\infty}|X_t| = \infty$. When $\alpha\in(1,2)$, the process hits the origin with probability one (for all points of issue). In both cases, the process will cross the origin infinitely often.

\smallskip

 \noindent In \cite[Section~6]{budd} the following example of  a positive self-similar Markov processes, say $Y^*=(Y^*_t,t\geq 0)$, called a ricocheted stable process,  was introduced in the context of the theory of planar maps. 
Fix a  $\mathfrak{p}\in [0,1]$ and let $\tau^-_0 = \inf\{t>0: X_t<0\}$. The stochastic dynamics of $Y^*$ are as follows. 
From its point of issue in $(0,\infty)$, $Y^*$ evolves as a $X$ until its first passage into $(-\infty,0)$, i.e. $\tau^-_0$.
At that time an independent coin is flipped with probability $\mathfrak{p}$ of heads. If heads is thrown, then  the process $Y^*$ is immediately transported to $-X_{\tau^-_0}$ (i.e. it is ``ricocheted'' across $0$). If tails is thrown, then it is sent to $0$ and the process is killed. In the event  $Y^*$ is  ricocheted, it continues to evolve as an independent copy of $X$, flipping a new coin on first pass into $(-\infty,0)$ and so on. 

\smallskip

 \noindent The process $Y^*$ can be viewed as a way of resurrecting (with probability $\mathfrak{p}$) the process $(X_t\mathbf{1}_{(t<\tau^-_0)}, t\geq 0)$ every time it is about to die; indeed, it degenerates to the latter process when $\mathfrak{p}=0$. In this respect the ricocheted process is related to the forthcoming work of \cite{CCRtba}.
This paper highlights the interesting phenomenon as to whether such resurrected processes will eventually continuously absorb at the origin or not. 

\smallskip

\noindent As alluded to above, ricocheted processes were considered in  \cite{budd}. In particular it was shown in Proposition 11 there that the characteristic exponent of the L\'evy process associated to $Y^*$ via the Lamperti transform takes the shape
\begin{equation}\label{eq:budd}
\Psi^*(\theta)=\frac{\Gamma(\alpha-\i\theta)\Gamma(1+\i\theta)}{\pi}\left[\sin(\pi(\alpha\hat{\rho} -\i\theta))-\mathfrak{p}\sin(\pi\alpha\hat{\rho})\right],\quad \theta\in \mathbb{R}.
\end{equation}
Moreover,  by setting 
\[
\sigma:=\frac{1}{2}-\alpha\hat{\rho}\in \left(-\frac{1}{2},\frac{1}{2}\right)\text{ and }b:=\frac{1}{\pi}\arccos(\mathfrak{p}\cos(\pi\sigma))\in \left[\vert\sigma\vert,\frac{1}{2}\right],
\]
 we may also write 
\begin{equation}\label{eq:WH}
\Psi^*(\theta)
=2^{\alpha}\dfrac{\Gamma \left(\frac{1+\i\theta}{2}\right)\Gamma\left(\frac{2+\i\theta}{2}\right)}{\Gamma\left( \frac{\sigma+b+\i\theta}{2}\right)\Gamma\left(\frac{\sigma-b+\i\theta+2}{2}\right)}\times\frac{\Gamma \left(\frac{\alpha-\i\theta}{2}\right)\Gamma\left(\frac{1+\alpha-\i\theta}{2}\right)}{\Gamma\left( \frac{b-\sigma-\i\theta}{2}\right)\Gamma\left( \frac{2-\sigma-b-\i\theta}{2}\right)},\quad \theta\in \mathbb{R}.
\end{equation}
With this in hand we now see an apparent relation with  Corollary~\ref{corollary}.

\begin{proposition}\label{thm:wh-for-pssmp-ricocheted}
Provided $\alpha\in [1-\sigma-b,b-\sigma+1]$, equivalently $\mathfrak{p}\sin(\pi\alpha\hat{\rho})\leq \sin(\pi\alpha\rho)$, then the two  factors either side of $\times$ in \eqref{eq:WH} identify the Wiener-Hopf factorisation. That is to say, the exponent in \eqref{eq:WH} belongs to the class of double hypergeometric L\'evy processes. \qed
\end{proposition}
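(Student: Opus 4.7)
The plan is to recognise the two factors on either side of $\times$ in \eqref{eq:WH} as double beta Laplace exponents (up to a harmless scaling of the argument by $2$ and an overall positive multiplicative constant), and then to invoke the uniqueness of the Wiener--Hopf factorisation together with Corollary~\ref{corollary}. Concretely, setting $\lambda=\i\theta$ in the left factor and $\lambda=-\i\theta$ in the right factor of \eqref{eq:WH}, I would define
\[
\Phi^-(2\mu) = 2^{\alpha/2}\,\mathtt{B}\bigl(\tfrac12,1,\tfrac{\sigma+b}{2},\tfrac{\sigma-b+2}{2};\mu\bigr), \qquad \Phi^+(2\mu) = 2^{\alpha/2}\,\mathtt{B}\bigl(\tfrac{\alpha}{2},\tfrac{\alpha+1}{2},\tfrac{b-\sigma}{2},\tfrac{2-\sigma-b}{2};\mu\bigr),
\]
so that $\Psi^*(\theta)=\Phi^+(-\i\theta)\Phi^-(\i\theta)$ by the very shape of \eqref{eq:WH}.

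Next I would verify that each of these two quadruples lies in $\GG$ via condition \ref{charact:i'} of Theorem~\ref{proposition} with $k=0$. After putting the parameters in the order $\alpha_\pm\leq\beta_\pm$ and $\gamma_\pm\leq\delta_\pm$ (which uses only $b\leq 1$), the four required inequalities for the ``$-$'' quadruple reduce, under the standing bounds $\sigma\in(-1/2,1/2)$ and $b\in[|\sigma|,1/2]$, to $0\leq\sigma+b\leq 1$ and $\sigma\leq b\leq\sigma+1$, all automatic. For the ``$+$'' quadruple the same four inequalities read $b-\sigma\leq\alpha$, $\alpha\leq 2-\sigma-b$, $2-\sigma-b\leq\alpha+1$, $\alpha+1\leq b-\sigma+2$: the two outer ones again follow from $b\leq 1/2$, whereas the two inner ones combine into exactly the hypothesis $\alpha\in[1-\sigma-b,\,b-\sigma+1]$.

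To convert this algebraic condition to the trigonometric form stated in the proposition, I would use $\sigma=\tfrac12-\alpha\hat\rho$ to write $\sin(\pi\alpha\hat\rho)=\cos(\pi\sigma)$ and $\sin(\pi\alpha\rho)=\cos(\pi(1-\alpha-\sigma))$, together with the defining relation $\cos(\pi b)=\mathfrak{p}\cos(\pi\sigma)$. The inequality $\mathfrak{p}\sin(\pi\alpha\hat\rho)\leq\sin(\pi\alpha\rho)$ then becomes $\cos(\pi b)\leq\cos(\pi(1-\alpha-\sigma))$, and since both arguments lie in $[0,\pi]$, monotonicity of $\cos$ on that interval delivers the equivalent $|1-\alpha-\sigma|\leq b$.

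To conclude: since both quadruples lie in $\GG$, Theorem~\ref{proposition} yields that each of the $\mathtt{B}(\cdots;\cdot)$ is a complete Bernstein function; this class is preserved under the scaling $\mu\mapsto 2\mu$ and positive multiplication by $2^{\alpha/2}$, so $\Phi^\pm$ are themselves Laplace exponents of subordinators. Uniqueness of the spatial Wiener--Hopf factorisation (up to reciprocal positive constants, which here cancel between $\Phi^+$ and $\Phi^-$) then identifies $\Phi^\pm$ as \emph{the} Wiener--Hopf factors of $\Psi^*$, and Corollary~\ref{corollary} places $\Psi^*$ in the double hypergeometric class. The only real obstacle I anticipate is the fussy bookkeeping: keeping the stable-process parameter $\alpha\in(0,2)$ distinct from the generic quadruple parameter called $\alpha$ in Theorem~\ref{proposition}, and matching the algebraic interlacing cleanly with the trigonometric hypothesis without misplacing a sign or a factor of $2$.
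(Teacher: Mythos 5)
Your overall strategy is correct and is essentially what the paper (which simply states the proposition with a \qed) has in mind: read off the two factors of \eqref{eq:WH} as double beta Laplace exponents up to the harmless rescaling $\lambda\mapsto\lambda/2$ and the constant $2^{\alpha/2}$, verify that the corresponding quadruples satisfy condition \ref{charact:i'} of Theorem~\ref{proposition} with $k=0$, and then invoke uniqueness of the spatial Wiener--Hopf factorisation together with Corollary~\ref{corollary}. Your analysis of the ``$-$'' quadruple is correct, and the trigonometric translation via $\cos(\pi b)=\mathfrak{p}\cos(\pi\sigma)$ and $\sin(\pi\alpha\rho)=\cos(\pi(1-\alpha-\sigma))$ arrives at the right equivalent form $|1-\alpha-\sigma|\leq b$ (though note that $1-\alpha-\sigma$ can be negative, so you should first invoke the evenness of $\cos$ before using its monotonicity on $[0,\pi]$).

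However, there is a genuine bookkeeping error in the ``$+$'' quadruple which, if taken at face value, proves a false statement. The four inequalities in the chain $\frac{b-\sigma}{2}\leq\frac{\alpha}{2}\leq\frac{2-\sigma-b}{2}\leq\frac{1+\alpha}{2}\leq\frac{b-\sigma+2}{2}$ read, in order, $\alpha\geq b-\sigma$, $\alpha\leq 2-\sigma-b$, $\alpha\geq 1-\sigma-b$, $\alpha\leq b-\sigma+1$. You claim the two ``inner'' ones (the second and third) combine to exactly the hypothesis; but they give $\alpha\in[1-\sigma-b,\,2-\sigma-b]$, which is \emph{strictly} larger than $[1-\sigma-b,\,b-\sigma+1]$ whenever $b<\tfrac12$ (i.e.\ $\mathfrak{p}>0$). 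You also claim the two ``outer'' ones follow from $b\leq\tfrac12$; the first does, but the fourth, $\alpha\leq b-\sigma+1$, is precisely the upper half of the hypothesis and cannot follow from $b\leq\tfrac12$ plus the standing bounds (the paper's own counterexample $\alpha=21/20$, $\rho=5/21$, $\mathfrak{p}=0.9$ satisfies all standing bounds yet violates it). The correct bookkeeping: inequalities three and four are exactly the hypothesis $\alpha\in[1-\sigma-b,\,b-\sigma+1]$, while the first two follow from them together with $b\leq\tfrac12$ (since $b-\sigma\leq 1-\sigma-b$ and $b-\sigma+1\leq 2-\sigma-b$ are both equivalent to $b\leq\tfrac12$). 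With this correction the argument goes through.
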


\noindent There are a number of remarks that are worth making at this point. First, 
$b\in \{\sigma,-\sigma\}$ occurs iff $\mathfrak{p}=1$. In that case we see that $\xi^*$ has infinite lifetime; moreover, by computing $\Psi^{*\prime}(0)$, we see that $Y^*$ is absorbed at the origin in finite time, oscillates between $0$ and $\infty$, or drifts to $\infty$, according as $\sigma<0$, $\sigma=0$ or $\sigma>0$ (equivalently $\alpha\hat\rho>1/2$,  $\alpha\hat\rho=1/2$ or  $\alpha\hat\rho<1/2$), which resonates with the work on resurrected L\'evy processes in \cite{CCRtba}.

\smallskip

\noindent
Second,
when $\mathfrak{p}=0$, and hence $b=\frac{1}{2}$, we recover the Wiener-Hopf factorization of the Lamperti stable L\'evy process corresponding to killing the stable process on hitting $(-\infty,0)$ (in this case the condition $\alpha\in [1-\sigma-b,b-\sigma+1]$ is met).

\smallskip

\noindent
Third, one can easily verify that  the condition $\alpha\in [1-\sigma-b,b-\sigma+1]$ is a necessarily and sufficient condition needed  to ensure the ``interlacing'' property of Theorem \ref{proposition}. Despite the exponent $\Psi^*$ falling outside of the double hypergeometric class when the aforementioned condition fails, we nonetheless conjecture that  \eqref{eq:WH} is in fact the correct factorization for the whole of the parameter regime. By way of example, taking e.g. $\alpha=21/20$, $\rho=5/21$ and $\mathfrak{p}=0.9$, then the condition  $\alpha\in [1-\sigma-b,b-\sigma+1]$ fails, and indeed numerically it is seen that the second factor in  \eqref{eq:WH} does not correspond to a Pick function; still, an inspection of the first few derivatives suggests that it nevertheless corresponds to a Bernstein function.

\smallskip

\noindent Finally, we note that, in the spirit of \cite{cc} and \cite{budd} there appear two more characteristic exponents of L\'evy processes which are obtained by a simple Esscher transform once we note that $\i(b+\sigma)$ and $-\i(b-\sigma)$ are roots of $\Psi^*$.  Indeed, again provided $\alpha\in [1-\sigma-b,b-\sigma+1]$, we also have that $\Psi ^*(z+\i(b+\sigma))$ and $\Psi ^*(z-\i(b-\sigma))$ fall into the double hypergeometric class.

\smallskip
\noindent
 Let now $T_0^*$ be the first hitting time of $0$ by the process $Y^*$,  and let $\zeta^*$ be the lifetime of $\xi^*$. By the Lamperti transform we may  express $$T_0^*=\int_0^{\zeta^*}e^{\alpha \xi_s^*}\dd s=2^{-\alpha}\int_0^{2^\alpha\zeta^*}e^{-(-2\xi^*_{2^{-\alpha}u})/(2/\alpha)}\dd u,$$ so that Proposition~\ref{proposition:mellin} (coupled with Remark~\ref{remark:mellin}) yields the Mellin transform of $T_0^*$ provided $0<b-\sigma<\alpha$. We leave making the eventual expression explicit to the interested reader. 
 We do however give the following detail.
 
 \begin{corollary}\label{corollary:law-of-sup-and-of-inf}
Assume $\alpha\in [1-\sigma-b,b-\sigma+1]$. Let, respectively, $\overline{Y^*}_{T_0^*-}:=\sup\{Y^*_s:s\in [0,T_0^*)\}$ and $\underline{Y}^*_{T_0^*-}:=\inf\{Y^*_s:s\in [0,T_0^*)\}$ be the overall supremum  and the overall infimum of $Y^*$ before absorption at zero. Correspondingly  introduce $\overline{\xi^*}_{\zeta^*-}$ and $\underline{\xi^*}_{\zeta^*-}$ in the obvious way. We have the following assertions: 
\begin{enumerate}[(i)]
\item\label{cor:i:sup}  If $\mathfrak{p}=1$ and $\alpha\hat{\rho}\leq \frac{1}{2}$, then a.s. $T_0^*=\zeta^*=\overline{Y^*}_{T_0^*-}=\overline{\xi^*}_{\zeta^*-}=\infty$. Otherwise the quantities $T_0^*$ and $\overline{Y^*}_{T_0^*-}=\exp(\overline{\xi^*}_{\zeta^*-})$ are a.s. finite, and the Laplace transform of the law of $\log(\overline{Y^*}_{T_0^*-}/Y_0)=\overline{\xi^*}_{\zeta^*-}-\xi^*_0$ (this law being also that of  $\log(Y^*_{T_0^*-}/\underline{Y^*}_{T_0^*-})=\xi^*_{\zeta-}-\underline{\xi^*}_{\zeta^*-}$ when $\mathfrak{p}<1$) is given by the map 
\[
[0,\infty)\ni z\mapsto \frac{\sqrt{\pi}\Gamma(\alpha)2^{1-\alpha}}{\Gamma\left( \frac{b-\sigma}{2}\right)\Gamma\left( \frac{2-\sigma-b}{2}\right)}\frac{\Gamma\left( \frac{b-\sigma+z}{2}\right)\Gamma\left( \frac{2-\sigma-b+z}{2}\right)}{\Gamma \left(\frac{\alpha+z}{2}\right)\Gamma\left(\frac{1+\alpha+z}{2}\right)}.
\]
\item\label{cor:ii:inf} If $\mathfrak{p}=1$ and $\alpha\hat{\rho}\geq \frac{1}{2}$, then $\zeta^*=\infty$ and  $\underline{Y}^*_{T_0^*-}=0$. Otherwise $\underline{Y}^*_{T_0^*-}=\exp(\underline{\xi^*}_{\zeta^*-})$ is a.s. strictly positive, and the Laplace transform of the law of $\log(Y_0/\underline{Y}^*_{T_0^*-})=\xi^*_0-\underline{\xi^*}_{\zeta^*-}$ (this law being also that of  $\log(\overline{Y^*}_{T_0^*-}/Y^*_{T_0^*-})=\overline{\xi^*}_{\zeta^*-}-\xi^*_{\zeta^*-}$ when $\mathfrak{p}<1$) is given by the map 
\[
[0,\infty)\ni z\mapsto \frac{\sqrt{\pi}}{\Gamma\left( \frac{\sigma+b}{2}\right)\Gamma\left(\frac{\sigma-b+2}{2}\right)}\frac{\Gamma\left( \frac{\sigma+b+z}{2}\right)\Gamma\left(\frac{\sigma-b+z+2}{2}\right)}{\Gamma \left(\frac{1+z}{2}\right)\Gamma\left(\frac{2+z}{2}\right)}.
\]
\end{enumerate}
\end{corollary}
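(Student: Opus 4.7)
The plan is to extract the Wiener--Hopf factors of $\xi^*$ from \eqref{eq:WH} and then apply the classical fluctuation identities
\[
\mathbf{E}\!\left[\mathrm{e}^{-z\overline{\xi^*}_{\zeta^*-}}\right]=\frac{\Phi^+(0)}{\Phi^+(z)}\quad \text{and}\quad \mathbf{E}\!\left[\mathrm{e}^{z\underline{\xi^*}_{\zeta^*-}}\right]=\frac{\Phi^-(0)}{\Phi^-(z)},\qquad z\geq 0,
\]
valid (with $\xi^*$ started at $0$) whenever $\xi^*$ is killed or drifts to $-\infty$ (respectively, $+\infty$); these identities are straightforward consequences of the fact that $\overline{\xi^*}_{\zeta^*-}$ is distributed as the value of the ascending ladder-height subordinator evaluated at an independent exponential time with parameter $\Phi^+(0)$, and dually. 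Under the standing hypothesis $\alpha\in[1-\sigma-b,b-\sigma+1]$, Proposition~\ref{thm:wh-for-pssmp-ricocheted} guarantees that the two factors in \eqref{eq:WH} furnish a bona fide Wiener--Hopf factorisation, so I may take, up to positive multiplicative constants (immaterial since they cancel in the ratios),
\[
\Phi^-(z)=\frac{\Gamma\!\left(\tfrac{1+z}{2}\right)\Gamma\!\left(\tfrac{2+z}{2}\right)}{\Gamma\!\left(\tfrac{\sigma+b+z}{2}\right)\Gamma\!\left(\tfrac{\sigma-b+2+z}{2}\right)},\qquad \Phi^+(z)=\frac{\Gamma\!\left(\tfrac{\alpha+z}{2}\right)\Gamma\!\left(\tfrac{1+\alpha+z}{2}\right)}{\Gamma\!\left(\tfrac{b-\sigma+z}{2}\right)\Gamma\!\left(\tfrac{2-\sigma-b+z}{2}\right)}.
\]

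To dispose of the boundary regimes I plan to compute $\Psi^*(0)=\Gamma(\alpha)(1-\mathfrak{p})\sin(\pi\alpha\hat{\rho})/\pi$ directly from \eqref{eq:budd}: this is strictly positive exactly when $\mathfrak{p}<1$, so that $\xi^*$ is killed at an independent exponential time of that rate, while for $\mathfrak{p}=1$ one has $b=|\sigma|$ and $\xi^*$ is unkilled. In the latter case the long-term behaviour is dictated by $\sgn(\sigma)$ as already recorded in the remarks following Proposition~\ref{thm:wh-for-pssmp-ricocheted}: when $\alpha\hat{\rho}\leq 1/2$ (i.e.\ $\sigma\geq 0$) one has $\overline{\xi^*}_{\zeta^*-}=+\infty$ a.s.\ and, since the $\mathfrak{p}=1$-ricochet never sends $Y^*$ to the cemetery, also $T_0^*=\zeta^*=+\infty$; symmetrically $\underline{\xi^*}_{\zeta^*-}=-\infty$ when $\alpha\hat{\rho}\geq 1/2$. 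The two formal Laplace transforms remain consistent in these degenerate cases, since $b\mp\sigma=0$ forces a pole of $\Gamma$ in the numerator of $\Phi^\pm(0)$ and the ratios $\Phi^\pm(0)/\Phi^\pm(z)$ vanish identically.

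In the generic case the closed form is then obtained by evaluating the ratios: $\Phi^\pm(0)$ is simplified using Legendre's duplication identity $\Gamma(\alpha/2)\Gamma((\alpha+1)/2)=\sqrt{\pi}\,2^{1-\alpha}\Gamma(\alpha)$ (and $\Gamma(1/2)=\sqrt{\pi}$), producing precisely the prefactors displayed in \ref{cor:i:sup} and \ref{cor:ii:inf}. The passage from $\xi^*$ back to $Y^*$ is then immediate from the Lamperti representation \eqref{pssMpLamperti}: the time-change $\varphi$ is a continuous strictly increasing bijection $[0,T_0^*)\to[0,\zeta^*)$, so $\overline{Y^*}_{T_0^*-}=\exp(\overline{\xi^*}_{\zeta^*-})$, $\underline{Y^*}_{T_0^*-}=\exp(\underline{\xi^*}_{\zeta^*-})$, and $Y_0=\exp(\xi^*_0)$ renormalises correctly so that $\log(\overline{Y^*}_{T_0^*-}/Y_0)=\overline{\xi^*}_{\zeta^*-}-\xi^*_0$.

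Finally, the parenthetical distributional identities (valid for $\mathfrak{p}<1$) follow from the classical time-reversal duality for a L\'evy process killed at an independent exponential time: conditionally on $\zeta^*$, the reversed path $(\xi^*_{(\zeta^*-)-s}-\xi^*_{\zeta^*-})_{s\in[0,\zeta^*)}$ is equal in law to $(-\xi^*_s+\xi^*_0)_{s\in[0,\zeta^*)}$, and taking suprema yields $\overline{\xi^*}_{\zeta^*-}-\xi^*_0\stackrel{d}{=}\xi^*_{\zeta^*-}-\underline{\xi^*}_{\zeta^*-}$, with the dual identity for the infimum. The heavy analytic lifting has been done inside Proposition~\ref{thm:wh-for-pssmp-ricocheted}; what remains is a careful but routine manipulation of gamma functions and a case split for the boundary regimes, with the only real pitfall being to ensure that the right-hand Laplace transforms are interpreted as vanishing (not infinite) precisely in the cases where $\Phi^\pm(0)=0$.
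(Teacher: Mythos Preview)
Your proposal is correct and follows essentially the same route as the paper: identify the Wiener--Hopf factors $\Phi^\pm$ via Proposition~\ref{thm:wh-for-pssmp-ricocheted}, invoke the standard fluctuation identity $\mathbf{E}[e^{-z\overline{\xi^*}_{\zeta^*-}}]=\Phi^+(0)/\Phi^+(z)$ (which the paper phrases as $H^{+*}_{\eta^*-}=\overline{\xi^*}_{\zeta^*-}$ with $\eta^*\sim\mathrm{Exp}(\Phi^{+*}(0))$), treat the degenerate cases via $\Phi^\pm(0)=0$, and transfer to $Y^*$ through the Lamperti bijection. Your extra remarks on Legendre duplication and the time-reversal justification of the parenthetical identities are welcome details the paper leaves implicit; one small slip is that in the sentence ``$b\mp\sigma=0$ forces a pole of $\Gamma$ in the numerator of $\Phi^\pm(0)$'' you mean the \emph{denominator} of $\Phi^\pm$ (equivalently, the denominator of the displayed prefactor), which is what makes $\Phi^\pm(0)=0$ and the ratio vanish.
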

\begin{proof}
The various (a.s.; we have in some places omitted this qualification for brevity) equalities of random variables follow from the Lamperti transformation. The equalities in law are part of the statement of the Wiener-Hopf factorization of $\xi^*$.  To see \ref{cor:i:sup}, let $\eta^*$ be the lifetime of the increasing ladder heights subordinator $H^{+*}$ of $\xi^*$ and denote by $\Phi^{+*}$ the Laplace exponent of $H^{+*}$. Then $\eta^*\sim\mathrm{Exp}(\Phi^{+*}(0))$ and $H^{+*}_{\eta^*-}=\overline{\xi^*}_{\zeta^*-}$. When $\Phi^{+*}(0)=0$, i.e. $b=\sigma$, this renders $\zeta^*=\overline{\xi^*}_{\zeta^*-}=\infty$ a.s., hence by the Lamperti transform a.s. $T_0^*=\overline{\xi^*}_{\zeta^*-}=\infty$. Otherwise, when $\Phi^{+*}(0)>0$, it then follows from the statements surrounding the Wiener-Hopf factorization of L\'evy processes that $\mathbf{E}[e^{-\theta H^{+*}_{\zeta^*-}}]=\frac{\Phi^{+*}(0)}{\Phi^{+*}(\theta)}$ for $\theta\in [0,\infty)$. The argument for  \ref{cor:ii:inf} is similar.
\end{proof}

\smallskip

\noindent We can also push the notion of a ricocheted stable process into the realms of rssMp but before doing so, we present another interesting example of a pssMp due to Alex Watson \cite{watson-private}, $Y^\natural=(Y^\natural_t,t\ge 0)$, associated to the symmetric stable process $X$, and  whose corresponding L\'evy process in the Lamperti transform also lies in the double hypergeometric class. Indeed, fix a  $\mathfrak{q}\in [0,1)$, take $\rho=1/2$ and let $\tau^-_0 $ be as before. The stochastic dynamics of $Y^\natural$ are as follows. 
From its point of issue in $(0,\infty)$, $Y^\natural$ evolves as  $X$ until its first passage into $(-\infty,0)$.
At that time an independent coin is flipped with probability $\mathfrak{q}$ of heads. If heads is thrown, then  the process $Y^\natural$ is immediately transported to $X_{\sigma_0}$, where $\sigma_0=\inf\{s\ge \tau_0: X_s>0\}$
(i.e. it is ``glued'' to its next positive position, the negative part being thus ``censored away''). If tails is thrown, then it is sent to $0$ and the process is killed. In the event  $Y^\natural$ is  glued, it continues to evolve as an independent copy of $X$, flipping a new coin on first pass into $(-\infty,0)$ and so on. Had we allowed $\mathfrak{q}=1$, then this case would correspond to the censored stable process of \cite{Watson-censored}.

\smallskip 
\noindent The characteristic exponent of $Y^\natural$, denote it by $\Psi^\natural$, may then be identified as:
\begin{equation}\label{psiglued}
\Psi^\natural(\theta)=\frac{\Gamma(\frac{\alpha}{2}-i\theta)\Gamma(\alpha-\i\theta)}{\Gamma(-\gamma+\frac{\alpha}{2}-\i\theta)\Gamma(\gamma+\frac{\alpha}{2}-\i\theta)}\times\frac{\Gamma(1-\frac{\alpha}{2}+i\theta)\Gamma(1+\i\theta)}{\Gamma(1-\gamma-\frac{\alpha}{2}+\i\theta)\Gamma(1+\gamma-\frac{\alpha}{2}+\i\theta)}, \quad \theta\in \mathbb{R},
\end{equation}
where $\gamma:=\frac{1}{\pi}\arcsin(\sqrt{\mathfrak{q}}\sin(\pi\frac{\alpha}{2}))$; see the comments below the proof of  Proposition \ref{proposition:rssMp}. By noting that $\gamma<\frac{\alpha}{2}\land (1-\frac{\alpha}{2})$, we can check that the two factors either side of the sign $\times$ identify the Wiener-Hopf factors and that $Y^\natural$ belongs to the double hypergeometric class.
\smallskip

\noindent As indicated above, we can introduce also a real-valued version, $X^*$, of the ricocheted stable process. Indeed, we can define the process $X^*$ similarly to $Y^*$ but with some slight differences. On crossing the origin from $(0,\infty)$ to $(-\infty,0)$, independently with probability $\mathfrak{p}$ it is ricocheted and with probability $1-\mathfrak{p}$, rather than being killed,  it continues to evolve with the dynamics of a stable process. Additionally, as the state space of $X^*$ is $\mathbb{R}$, when passing from $(-\infty, 0)$ to $(0,\infty)$, the same path adjustment is made on the independent flip of a coin, albeit with a different probability $\hat{\mathfrak{p}}$. Note, in the special case that $\mathfrak{p}=\hat{\mathfrak{p}}=0$, we have that $X^*$ is equal in law to nothing more than the underlying stable process $X$ from which its paths are derived. The reader should also bear in mind that we do not assume any a priori relation between $\mathfrak{p}$ and $\hat{\mathfrak{p}}$: we stress this because we use the notation $\hat{\rho}$ for $1-\rho$, so one might be misled into thinking that $\hat{\mathfrak{p}}=1-\mathfrak{p}$, which however is \emph{not} being assumed.\label{stress}

\smallskip

\noindent It is worth spending a little bit of time to address the question as to why the processes $X^*$, $Y^\natural$ and $Y^*$ are indeed  self-similar Markov processes (ssMp). As alluded to earlier, the fact that $Y^*$ is a ssMp comes from the reasoning of \cite{budd}. The explanation given there was via the use of the associated infinitesimal generators, which, in principle needs a little care with the  rigorous association  of the generator to the appropriate semigroup of $Y^*$. We argue here that there is a relatively straightforward pathwise justification that comes directly out of the Lamperti transform for $Y^*$.

\begin{proposition}\label{proposition:rssMp}We have that  
$Y^*$, $Y^\natural$ and $X^*$ are all ssMp with with self-similarity index $\alpha$.
\end{proposition}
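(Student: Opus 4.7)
The plan is to construct each of $Y^*$, $Y^\natural$ and $X^*$ explicitly as a measurable functional of a driving sequence consisting of i.i.d.\ copies of the stable process $X$ together with independent Bernoulli randomisations, and to read off both the scaling and strong Markov properties directly from the corresponding properties of the stable process together with the manifest equivariance of the surgery operations (ricochet $x\mapsto -x$, glue $x\mapsto X_{\sigma_0}$, or ``continue'') under the multiplicative action $x\mapsto cx$. Thus, on a rich enough probability space, take i.i.d.\ copies $(X^{(n)})_{n\ge 0}$ of $X$ and, independent of them, a Bernoulli sequence $(U_n)_{n\ge 1}$ with the appropriate parameter(s); starting $Y^*$ from $y>0$, run $X^{(0)}$ issued from $y$ up to its first-passage time $T_1$ into $(-\infty,0)$; if $U_1=0$ kill, otherwise restart with $X^{(1)}$ issued from $-X^{(0)}_{T_1}>0$ on $[T_1,T_2)$, and so on. The constructions of $Y^\natural$ (using $X_{\sigma_0}$ at each crossing when $U_n=1$) and of $X^*$ (using independent ricochet coins in both crossing directions, without killing) are entirely analogous.

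For the scaling property of index $\alpha$, fix $c>0$ and consider $Y'_t:=cY^*_{c^{-\alpha}t}$ started from $cy$. By self-similarity of $X^{(0)}$, the rescaled first piece $(cX^{(0)}_{c^{-\alpha}t})_{t\ge 0}$ has the law of a stable process issued from $cy$; its first-passage time into $(-\infty,0)$ is $c^{\alpha}T_1$ and its position at that time is $-cX^{(0)}_{T_1}$, which is exactly $c$ times the starting point of the second piece. Inducting on the crossings of $0$ (using that subsequent pieces are, conditionally on the crossing positions, independent stable processes and that the Bernoulli's are insensitive to $c$) gives $Y'\stackrel{\mathrm{d}}{=}Y^*$ under $\mathbb{P}_{cy}$. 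The identical argument applies to $Y^\natural$ (both $\sigma_0$ and $X_{\sigma_0}$ scale in the correct way) and to $X^*$.

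For the Markov property, each $T_n$ is a stopping time for the natural filtration of the driving sequence, and on the interval $[T_n,T_{n+1})$ the process evolves as a stable process issued from an $\mathcal{F}_{T_n}$-measurable position, so the strong Markov property on each piece is inherited directly from that of the stable process. Independence across pieces, independence of the coin flips and the fact that the restart rule at $T_n$ depends only on $(Y^*_{T_n-},U_{n+1})$ (and on no further past information) then promote this to a global strong Markov property, provided the sequence $(T_n)$ does not accumulate while the process is still alive. The chief obstacle lies precisely here: for $Y^*$ with $\mathfrak{p}<1$ or $Y^\natural$ with $\mathfrak{q}<1$ the number of crossings before killing is geometric and non-accumulation is automatic, but in the remaining (non-killed) cases, most delicately for $X^*$ when $\alpha\in(1,2)$, one must rule out an explosion of sign-change epochs. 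This is done by inverting the argument: the characteristic exponent of the Lamperti(--Kiu) image of the concatenation, identified on each piece via the known exponent of the underlying stable excursion together with the structure of the surgery, coincides with the (honest) L\'evy or Markov additive exponent of Corollary~\ref{corollary} and of \eqref{eq:WH}--\eqref{psiglued}; hence the time change $\int_0^\cdot e^{\alpha\xi_u}\mathrm{d}u$ is a.s.\ strictly increasing and continuous on the lifetime of $\xi$, which forces the crossing epochs of the ssMp to be locally finite. Combining scaling and strong Markov on any interval $[0,T_n)$ and letting $n\to\infty$ yields the ssMp property throughout the lifetime, completing the argument.
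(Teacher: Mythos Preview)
Your approach is a genuine alternative to the paper's. The paper does \emph{not} verify scaling and Markov directly; instead it works in the opposite direction: starting from the known Lamperti--stable exponent $\Psi^\dagger$ of the process killed on entering $(-\infty,0)$, it \emph{builds} an explicit (possibly killed) L\'evy process $\xi^*$ --- namely $\xi^\dagger$ stripped of its killing, plus an independent compound Poisson process with rate $q^\dagger\mathfrak{p}$ and jump law that of $\log(|X_{\tau^-_0}|/X_{\tau^-_0-})$, finally killed at rate $q^\dagger(1-\mathfrak{p})$ --- and then observes that the Lamperti transform of $\xi^*$ is, by inspection of the path between compound Poisson arrivals, exactly $Y^*$. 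The ssMp property is then automatic from the Lamperti bijection, and the analogous MAP construction handles $X^*$. This sidesteps entirely the accumulation issue you single out: a L\'evy process (resp.\ MAP) has a well-defined path on its whole lifetime, so the Lamperti (resp.\ Lamperti--Kiu) image is a priori a ssMp up to absorption.

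Your route is more hands-on and arguably more transparent about \emph{why} the dynamics are self-similar, but the weak point is precisely your treatment of non-accumulation. You write that the exponent of the Lamperti image ``coincides with the (honest) L\'evy or Markov additive exponent of Corollary~\ref{corollary} and of \eqref{eq:WH}--\eqref{psiglued}''. This is circular as stated: \eqref{eq:WH} and \eqref{psiglued} are \emph{derived} in the paper from the very construction you are trying to avoid, and Corollary~\ref{corollary} only tells you that certain products of double-beta functions are characteristic exponents, not that the piecewise object you have assembled is one of them (indeed, outside the range $\alpha\in[1-\sigma-b,b-\sigma+1]$ the factorised form is not even known to lie in $\GG$). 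To make your argument stand on its own you would have to build $\xi^*$ explicitly as a L\'evy process --- i.e.\ carry out exactly the paper's construction --- at which point the preceding direct verification of scaling and Markov becomes redundant. In short: your scaling argument is fine, your piecewise Markov argument is fine, but your resolution of the accumulation problem smuggles in the paper's proof, so the two approaches are not really independent.
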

\begin{proof}
Our strategy will be simply to identify the pathwise definition of $Y^*$, $Y^\natural$ and $X^*$  directly with a Lamperti/Lamperti--Kiu decomposition. Once this has been done, the statement follows immediately from the bijection onto the class of pssMp/rssMp in the Lamperti/Lamperti--Kiu transform.

\smallskip

\noindent Let us start with the case of $Y^*$. From \cite{cc} it is known that $(X_t\mathbf{1}_{(t<\tau^-_0)}, t\geq 0)$ is a pssMp, moreover the L\'evy process that underlies its Lamperti transform, say $\xi^\dagger$ has an explicit form for its characteristic exponent (indeed it belongs to the hypergeometric L\'evy processes). As a L\'evy process it falls into the category which are killed at an independent and exponentially distributed time with  a strictly positive rate, say $q^\dagger$. Suppose we write $\Psi^\dagger$ for the characteristic exponent of $\xi^\dagger$ and  consider the process $\xi^0$ whose characteristic exponent is $\Psi^\dagger - q^\dagger$. That is to say, $\xi^0$ is the L\'evy process whose dynamics are those of $\xi^\dagger$ albeit the exponential killing is removed.

\smallskip

\noindent Next, define a compound Poisson process $\xi^\circ$ with arrival rate $q^\dagger \mathfrak{p}$ and jump distribution which is equal in distribution to the random variable $\log (|X_{\tau^-_0}|/ X_{\tau^-_0-})$. Note, straightforward scaling arguments (of both the stopping time $\tau^-_0$ and $X$) can be used to show that  the distribution of  $\log (|X_{\tau^-_0}|/ X_{\tau^-_0-})$ does not depend on the point of issue of $X$; see for example Chapter 13 of \cite{kyprianou}.
The characteristic exponent of this compound Poisson process  is given by $q^\dagger \mathfrak{p}(1 - \mathbb{E}_1[(|X_{\tau^-_0}|/ X_{\tau^-_0-})^{\i\theta}])$.
\smallskip

\noindent Finally, we build the L\'evy process $\xi$ to have characteristic exponent given by 
\begin{equation}
\Psi^*(\theta) =\Psi^\dagger(\theta) - q^\dagger + q^\dagger \mathfrak{p}(1 - \mathbb{E}_1[(|X_{\tau^-_0}|/ X_{\tau^-_0-})^{\i\theta}]) + 
q^\dagger (1-\mathfrak{p}).
\label{psistart}
\end{equation}
In other words, the independent sum of $\xi^0$, the L\'evy process $\xi^\dagger$ without killing, and the compound Poisson process described in the previous paragraph with the resulting stochastic process killed at an independent and exponentially distributed time with rate $q^\dagger (1-\mathfrak{p})$.

\smallskip

\noindent Now consider the Lamperti transform of $\xi$. By splitting the resulting path over the events corresponding to the arrival of points in the compound Poisson processes and the killing time, it is straightforward to see that the stochastic evolution of \eqref{pssMpLamperti} matches that of $Y^*$ and hence they are equal in law. In other words, $Y^*$ is a positive self-similar Markov process. 

\smallskip

\noindent  In the above description, instead of killing at rate $q^\dagger (1-\mathfrak{p})$, we could also  see this as the rate at which switching of an independent Markov chain occurs from $+1$ to $-1$. Accordingly to build up a MAP which corresponds to the process $X^*$, we need to define 
\[
G_{1,-1}^*(\theta): = \mathbb{E}_1[(|X_{\tau^-_0}|/ X_{\tau^-_0-})^{\i\theta}]=\frac{\Gamma(\alpha-\i\theta)\Gamma(1+\i \theta)}{\Gamma(\alpha)},\quad z\in \mathbb{R},\qquad \theta \geq 0,
\]
where the second equality comes from \cite[Corollary~11]{rivero}.
Note  that, although we have seen the Fourier transform of $\log (|X_{\tau^-_0}|/ X_{\tau^-_0-})$ before, this is also the change in the radial exponent $\xi$ when $X^*$ passes from $(0,\infty)$ to $(-\infty,0)$.
\smallskip

\noindent  By replacing the role of $\rho$ by $\hat\rho$ in the definition of $\Psi^\dagger, q^\dagger, \mathfrak{p}$ (henceforth written  $\hat{\Psi}^\dagger, \hat{q}^\dagger, \hat{\mathfrak{p}}$), we can similarly describe the part of the MAP that corresponds to $X^*$ until its first passage from $(-\infty,0)$ to $(0,\infty)$, when issued from a negative value. 
\smallskip

\noindent  Thanks to the piecewise evolution of $X^*$, considered at each crossing of the origin, we can thus conclude that it is a rssMp with underlying MAP whose matrix exponent is given by
\begin{align*}
&\mathbf{\Psi}^*(\theta) \\
&=- \left[
\begin{array}{cc}
\Psi^\dagger(\theta) - q^\dagger + q^\dagger \mathfrak{p}(1 - \mathbb{E}_1[(|X_{\tau^-_0}|/ X_{\tau^-_0-})^{\i\theta}]) & 0\\
0 &\hat{\Psi}^\dagger(\theta) - \hat{q}^\dagger + \hat{q}^\dagger\hat{ \mathfrak{p}}(1 - \hat{\mathbb{E}}_1[(|X_{\tau^-_0}|/ X_{\tau^-_0-})^{\i\theta}])
\end{array}\right]\\
&\hspace{2cm}+\left[\begin{array}{cc}
-q^\dagger (1-\mathfrak{p}) &q^\dagger (1-\mathfrak{p}) \mathbb{E}_1[(|X_{\tau^-_0}|/ X_{\tau^-_0-})^{\i\theta}] \\
\hat{q}^\dagger (1-\hat{\mathfrak{p}})\hat{\mathbb{E}}_1[(|X_{\tau^-_0}|/ X_{\tau^-_0-})^{\i\theta}]&-\hat{q}^\dagger (1-\hat{\mathfrak{p}}) \\
\end{array}
\right]\\
&= \left[
\begin{array}{cc}
-\Psi^\dagger(\theta) + q^\dagger \mathfrak{p}\mathbb{E}_1[(|X_{\tau^-_0}|/ X_{\tau^-_0-})^{\i\theta}] & q^\dagger (1-\mathfrak{p}) \mathbb{E}_1[(|X_{\tau^-_0}|/ X_{\tau^-_0-})^{\i\theta}] \\
\hat{q}^\dagger (1-\hat{\mathfrak{p}})\hat{\mathbb{E}}_1[(|X_{\tau^-_0}|/ X_{\tau^-_0-})^{\i\theta}] &-\hat{\Psi}^\dagger(\theta) + \hat{q}^\dagger\hat{ \mathfrak{p}} \hat{\mathbb{E}}_1[(|X_{\tau^-_0}|/ X_{\tau^-_0-})^{\i\theta}]
\end{array}\right]
\end{align*}
where $\hat{\mathbb{P}}$ is the law of $-X$.

\noindent Finally, we consider  the case of $Y^\natural$ and recall that $\rho=\frac{1}{2}$. 
The analysis is the same as in the case of $Y^*$ except that the compound Poisson process $\xi^\circ$ there gets replaced with a different compound Poisson process $\xi^\bullet$.
From the path analysis of $Y^\natural$ the jumps of $\xi^\bullet$ have the distribution of  $\log (X_{\sigma_0}/ X_{\tau^-_0-})$, while its rate  is $q^\dagger\mathfrak{q}$. The  characteristic function of $\log (X_{\sigma_0}/ X_{\tau^-_0-})$ follows from the analysis in \cite[Proposition~4.2]{Watson-censored}: 
\[
\mathbb{E}_1\left[\left(\frac{X_{\sigma_0}}{X_{\tau^-_0-}}\right)^{\i \theta}\right]=\frac{\Gamma(1-\frac{\alpha}{2}+\i\theta)\Gamma(\frac{\alpha}{2}-\i\theta)\Gamma(1+\i\theta)\Gamma(\alpha-\i\theta)}{\Gamma(\frac{\alpha}{2})\Gamma(1-\frac{\alpha}{2}) \Gamma(\alpha)},\quad \theta\in\mathbb{R}.
\]
This completes the proof.
\end{proof}

From the above proof we can recover the result of \cite{budd}. Indeed, from \cite[Corollary~1]{cc}
\[
\Psi^\dagger(\theta)=\frac{\Gamma(\alpha-\i\theta)\Gamma(1+\i\theta)}{\Gamma(\alpha\hat{\rho}-\i\theta)\Gamma(1-\alpha\hat{\rho}+\i\theta)},\quad z\in \mathbb{R},
\]
 in particular $q^\dagger =\Psi^\dagger(0)=\Gamma(\alpha)/(\Gamma(\alpha\hat{\rho})\Gamma(1-\alpha\hat{\rho}))$.
It now follows from \eqref{psistart} that 
\begin{align*}
\Psi^*(\theta)& =\Psi^\dagger(\theta) - q^\dagger \mathfrak{p} \mathbb{E}_1[(|X_{\tau^-_0}|/ X_{\tau^-_0-})^{\i\theta}]  
\\
&=\frac{\Gamma(\alpha-\i\theta)\Gamma(1+\i\theta)}{\Gamma(\alpha\hat{\rho}-\i\theta)\Gamma(1-\alpha\hat{\rho}+\i\theta)} - \frac{\Gamma(\alpha)}{\Gamma(\alpha\hat{\rho})\Gamma(1-\alpha\hat{\rho})}\mathfrak{p} \frac{\Gamma(\alpha-\i\theta)\Gamma(1+\i\theta)}{\Gamma(\alpha)} \\
&=\frac{\Gamma(\alpha-\i\theta)\Gamma(1+\i\theta)}{\pi}\left[\sin(\pi(\alpha\hat{\rho} -\i\theta))-\mathfrak{p}\sin(\pi\alpha\hat{\rho})\right], \qquad \theta\in\mathbb{R},
\end{align*}
which agrees with \eqref{eq:budd}. Likewise we can deduce the explicit form of $\Psi^\natural$ reported in \eqref{psiglued}.

\smallskip

\noindent  In a similar spirit, we can proceed to compute the matrix exponent $\mathbf{\Psi}^*$ to obtain
\begin{align}
&\mathbf{\Psi}^*(\theta)\notag\\
&= \frac{\Gamma(\alpha-\i\theta)\Gamma(1+\i\theta)}{\pi}
\left[
\begin{array}{cc}
\mathfrak{p}\sin(\pi\alpha\hat{\rho})-\sin(\pi(\alpha\hat{\rho} -\i\theta)) & (1-\mathfrak{p})\sin(\pi\alpha\hat{\rho})\\
(1-\hat{\mathfrak{p}})\sin(\pi\alpha\rho)& \hat{\mathfrak{p}}\sin(\pi\alpha\rho) -\sin(\pi(\alpha\rho-\i\theta))
\end{array}
\right],
\end{align}
for  $\theta\in \mathbb{R}$. For $\mathfrak{p}=\hat{\mathfrak{p}}=0$ this reduces to the matrix exponent of a stable process killed on hitting $0$ as discussed in \cite{deep1}. We conclude this discussion with the following corollary which identifies $\mathbf{\Psi}$ in a form that is similar to the  double hypergeometric analogoue.
\begin{corollary}
For $\theta\in\mathbb{R}$, 
\begin{align*}
&\mathbf{\Psi}^*(\theta)\\
&=-2^\alpha\left[
\begin{array}{cc}
 \frac{\Gamma \left(\frac{1+\i\theta}{2}\right)\Gamma\left(\frac{2+\i\theta}{2}\right)}{\Gamma\left( \frac{\sigma+b+\i\theta}{2}\right)\Gamma\left(\frac{\sigma-b+\i\theta+2}{2}\right)}\frac{\Gamma \left(\frac{\alpha-\i\theta}{2}\right)\Gamma\left(\frac{1+\alpha-\i\theta}{2}\right)}{\Gamma\left( \frac{b-\sigma-\i\theta}{2}\right)\Gamma\left( \frac{2-\sigma-b-\i\theta}{2}\right)} &
-\frac{\Gamma \left(\frac{1+\i\theta}{2}\right)\Gamma\left(\frac{2+\i\theta}{2}\right)}{\Gamma\left( \frac{\sigma+b}{2}\right)\Gamma\left(\frac{\sigma-b+2}{2}\right)}\frac{\Gamma \left(\frac{\alpha-\i\theta}{2}\right)\Gamma\left(\frac{1+\alpha-\i\theta}{2}\right)}{\Gamma\left( \frac{b-\sigma}{2}\right)\Gamma\left( \frac{2-\sigma-b}{2}\right)}\\
&\\
-\frac{\Gamma \left(\frac{1+\i\theta}{2}\right)\Gamma\left(\frac{2+\i\theta}{2}\right)}{\Gamma\left( \frac{\hat{\sigma}+\hat{b}}{2}\right)\Gamma\left(\frac{\hat{\sigma}-\hat{b}+2}{2}\right)}\frac{\Gamma \left(\frac{\alpha-\i\theta}{2}\right)\Gamma\left(\frac{1+\alpha-\i\theta}{2}\right)}{\Gamma\left( \frac{\hat{b}-\hat{\sigma}}{2}\right)\Gamma\left( \frac{2-\hat{\sigma}-\hat{b}}{2}\right)}
&\frac{\Gamma \left(\frac{1+\i\theta}{2}\right)\Gamma\left(\frac{2+\i\theta}{2}\right)}{\Gamma\left( \frac{\hat{\sigma}+\hat{b}+\i\theta}{2}\right)\Gamma\left(\frac{\hat{\sigma}-\hat{b}+\i\theta+2}{2}\right)}\frac{\Gamma \left(\frac{\alpha-\i\theta}{2}\right)\Gamma\left(\frac{1+\alpha-\i\theta}{2}\right)}{\Gamma\left( \frac{\hat{b}-\hat{\sigma}-\i\theta}{2}\right)\Gamma\left( \frac{2-\hat{\sigma}-\hat{b}-\i\theta}{2}\right)}
\end{array}
\right],
\end{align*}
where, as before, 
$
\sigma=-\alpha\hat{\rho}+1/2$  and $b:=\arccos(\mathfrak{p}\cos(\pi\sigma))/{\pi}$
and $\hat\sigma$ and $\hat{b}$ have the obvious meaning.\qed
\end{corollary}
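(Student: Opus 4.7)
The plan is to compare, entry by entry, the two expressions for $\mathbf{\Psi}^*(\theta)$: the trigonometric/gamma form just established immediately before the corollary, and the purely gamma-theoretic form of the statement. The diagonal entries are essentially for free, since the top-left entry of the trigonometric form is exactly $-\Psi^*(\theta)$ (and the bottom-right is its ``hatted'' analogue obtained by swapping $\rho\leftrightarrow\hat\rho$, $\mathfrak{p}\leftrightarrow \hat{\mathfrak{p}}$, equivalently $\sigma\leftrightarrow\hat{\sigma}$, $b\leftrightarrow \hat{b}$), and \eqref{eq:WH} already records precisely the sought gamma representation of $\Psi^*(\theta)$. So no new work is needed on the diagonal beyond invoking \eqref{eq:WH}.

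The off-diagonal entries demand a direct verification of a gamma-function identity. For the top-right, I need to show
\[
\frac{(1-\mathfrak{p})\sin(\pi\alpha\hat\rho)\,\Gamma(\alpha-\i\theta)\Gamma(1+\i\theta)}{\pi}
= 2^\alpha \,\frac{\Gamma\!\left(\tfrac{1+\i\theta}{2}\right)\Gamma\!\left(\tfrac{2+\i\theta}{2}\right)\Gamma\!\left(\tfrac{\alpha-\i\theta}{2}\right)\Gamma\!\left(\tfrac{1+\alpha-\i\theta}{2}\right)}{\Gamma\!\left(\tfrac{\sigma+b}{2}\right)\Gamma\!\left(\tfrac{\sigma-b+2}{2}\right)\Gamma\!\left(\tfrac{b-\sigma}{2}\right)\Gamma\!\left(\tfrac{2-\sigma-b}{2}\right)}.
\]
The first step is Legendre's duplication formula on the numerator: it converts
$\Gamma(\tfrac{1+\i\theta}{2})\Gamma(\tfrac{2+\i\theta}{2})=\sqrt{\pi}\,2^{-\i\theta}\Gamma(1+\i\theta)$ and
$\Gamma(\tfrac{\alpha-\i\theta}{2})\Gamma(\tfrac{1+\alpha-\i\theta}{2})=\sqrt{\pi}\,2^{1-\alpha+\i\theta}\Gamma(\alpha-\i\theta)$, whose product times $2^\alpha$ collapses to $2\pi\,\Gamma(\alpha-\i\theta)\Gamma(1+\i\theta)$.

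The denominator is handled by Euler's reflection formula, observing that $\tfrac{\sigma-b+2}{2}=1-\tfrac{b-\sigma}{2}$ and $\tfrac{2-\sigma-b}{2}=1-\tfrac{\sigma+b}{2}$, so the four-gamma product equals $\pi^2\bigl/\bigl[\sin(\pi\tfrac{\sigma+b}{2})\sin(\pi\tfrac{b-\sigma}{2})\bigr]$. The product-to-sum identity yields $\sin(\pi\tfrac{\sigma+b}{2})\sin(\pi\tfrac{b-\sigma}{2})=\tfrac{1}{2}(\cos(\pi\sigma)-\cos(\pi b))$, and the defining relation $\cos(\pi b)=\mathfrak{p}\cos(\pi\sigma)$ reduces this to $\tfrac{1-\mathfrak{p}}{2}\cos(\pi\sigma)$. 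Since $\sigma=\tfrac{1}{2}-\alpha\hat\rho$ gives $\cos(\pi\sigma)=\sin(\pi\alpha\hat\rho)$, the whole right-hand side simplifies to the left-hand side, matching also the explicit minus sign carried by the top-right entry inside the matrix. The bottom-left entry is then just the previous calculation with hats applied throughout, using $\cos(\pi\hat b)=\hat{\mathfrak{p}}\cos(\pi\hat\sigma)$ and $\cos(\pi\hat\sigma)=\sin(\pi\alpha\rho)$.

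There is no genuine obstacle here beyond bookkeeping: the content is purely computational. The most delicate moment is ensuring the signs line up, since the corollary carries an overall $-2^\alpha$ outside the matrix \emph{and} explicit minus signs on the off-diagonal entries, so one must be careful that these combine correctly against the trigonometric form (whose diagonal carries the $\mathfrak{p}\sin(\pi\alpha\hat\rho)-\sin(\pi(\alpha\hat\rho-\i\theta))$ pattern and whose off-diagonals are positive). Once the duplication/reflection reductions are laid out and the $b$-defining identity $\cos(\pi b)=\mathfrak{p}\cos(\pi\sigma)$ is deployed at the right moment, the equality is immediate.
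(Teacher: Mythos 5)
Your proof is correct and is exactly the intended argument: the paper states this corollary with a bare \qed, leaving the computation implicit since it amounts to combining the trigonometric form of $\mathbf{\Psi}^*(\theta)$ derived immediately above with the same Legendre-duplication/Euler-reflection manipulations that turn \eqref{eq:budd} into \eqref{eq:WH}. Your entry-by-entry verification—duplication on the numerator gammas, reflection on the denominator gammas, product-to-sum followed by $\cos(\pi b)=\mathfrak{p}\cos(\pi\sigma)$ and $\cos(\pi\sigma)=\sin(\pi\alpha\hat\rho)$—is precisely the bookkeeping the authors suppressed, and the signs do line up as you checked.
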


\noindent We note that the condition $\{\mathfrak{p},\hat{\mathfrak{p}}\}\subset [0,1)$ ensures that $J$ is irreducible. If $\mathfrak{p}=1$, then, albeit possibly only after the first crossing of the origin (in the case that $X^*$ is issued from a negative value), we are basically in the case of the pssMp $Y^*$.  An analogous statement can be made for $\hat{\mathfrak{p}}=1$.
\smallskip

\noindent For real $\theta$ in some neighbourhood of $0$, the explicit expression for $\mathbf{\Psi}^*(-\i \theta)$ can be used to determine its Perron-Frobenius eigenvalue $\chi(\theta)$ \cite[Paragraph~XI.2C]{asmussen}. From the aforesaid reference, it is known that $\chi(0)=0$
and that, when it exists, $\chi'(0)$ dictates the long-term behavior of $(\xi,J)$, the underlying MAP of $X^*$, in the sense that a.s. $\lim_{t\to\infty}\xi_t/t=\chi'(0)$ \cite[Corollary~XI.2.8]{asmussen}. Moreover, $\xi$ drifts to $\infty$, oscillates, or drifts to $-\infty$, according as to whether $\chi'(0)$  is valued  $>0$, $=0$ or $<0$ \cite[Proposition~XI.2.10]{asmussen}. By the Lamperti--Kiu transform, this corresponds to $\vert X^*\vert$ drifting to $\infty$, oscillating between $0$ and $\infty$ and hitting $0$ continuously, respectively. 
 \smallskip

\noindent 
An explicit computation yields 
$$\chi'(0)=\Gamma(\alpha)\frac{(1-\hat{\mathfrak{p}})\sin(\pi\alpha\rho)\cos(\pi\alpha\hat{\rho})+(1-\mathfrak{p})\sin(\pi\alpha\hat{\rho})\cos(\pi\alpha\rho)}{(1-\hat{\mathfrak{p}})\sin(\pi\alpha\rho)+(1-\mathfrak{p})\sin(\pi\alpha\hat{\rho})}.$$
From this we note that when $\mathfrak{p}=\hat{\mathfrak{p}}$ (in particular when both are equal to $0$ and $X^*=(X_t\mathbf{1}_{(t<\tau^-_0)}, t\geq 0)$), one obtains $\chi'(0)(\sin(\pi\alpha\rho)+\sin(\pi\alpha\hat{\rho}))=\Gamma(\alpha)\sin(\pi\alpha)$ (which is consistent with the polarity of $0$ for $X$ when $\alpha\in (0,1]$ and the fact that $X$ hits $0$ when $\alpha\in (1,2)$). Furthermore, whether or not $\mathfrak{p}=\hat{\mathfrak{p}}$, $\chi'(0)>0$ whenever $\{\alpha\rho,\alpha\hat{\rho}\}\subset (0,\frac{1}{2})$, in particular whenever $\alpha< \frac{1}{2}$, but in general the sign of $\chi'(0)$ will depend on $\alpha,\rho,\mathfrak{p},\hat{\mathfrak{p}}$ in a non-trivial ``balancing'' way. For instance, if $\hat{\rho}\alpha\in (\frac{1}{2},1)$, $\hat{\mathfrak{p}}$ is sufficiently close to $0$, while $\mathfrak{p}$ is sufficiently close to $1$, then $X^*$ will hit $0$ continuously, even though one may  also have $\alpha< 1$, in which case $(X_t\mathbf{1}_{(t<\tau^-_0)}, t\geq 0)$ will not be absorbed continuously at the origin.  More generally, we have

\begin{proposition}\label{proposition:hit-0-cts-general}
$X^*$  hits zero continuously iff $(1-\hat{\mathfrak{p}})\sin(\pi\alpha\rho)\cos(\pi\alpha\hat{\rho})+(1-\mathfrak{p})\sin(\pi\alpha\hat{\rho})\cos(\pi\alpha\rho)<0$, equivalently $\sin(\pi\alpha)<\hat{\mathfrak{p}}\sin(\pi\alpha\rho)\cos(\pi\alpha\hat{\rho})+\mathfrak{p}\sin(\pi\alpha\hat{\rho})\cos(\pi\alpha\rho)$; for this condition to prevail it is necessary (but not sufficient) that $\alpha\hat{\rho}>\frac{1}{2}$ or $\alpha\rho>\frac{1}{2}$. 
\end{proposition}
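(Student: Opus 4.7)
The plan is to reduce the proposition to a routine trigonometric sign analysis of the explicit formula for $\chi'(0)$ displayed immediately above the statement. Recall from the discussion preceding the proposition that $X^*$ hits $0$ continuously if and only if the radial component $\xi$ of the underlying MAP drifts to $-\infty$, which, by \cite[Proposition~XI.2.10]{asmussen}, is equivalent to $\chi'(0)<0$. Under the standing assumption $\{\mathfrak{p},\hat{\mathfrak{p}}\}\subset [0,1)$, both $1-\mathfrak{p}$ and $1-\hat{\mathfrak{p}}$ are strictly positive; the exclusion of one-sided jumps forces $\alpha\rho,\alpha\hat{\rho}\in (0,1)$, so that $\sin(\pi\alpha\rho)$ and $\sin(\pi\alpha\hat{\rho})$ are both strictly positive. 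Hence the denominator of $\chi'(0)$ is strictly positive, and since $\Gamma(\alpha)>0$ the sign of $\chi'(0)$ is exactly that of the numerator
$$N:=(1-\hat{\mathfrak{p}})\sin(\pi\alpha\rho)\cos(\pi\alpha\hat{\rho})+(1-\mathfrak{p})\sin(\pi\alpha\hat{\rho})\cos(\pi\alpha\rho),$$
which immediately yields the first claimed equivalence.

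For the equivalent reformulation, I would split $N$ as
$$N=\bigl[\sin(\pi\alpha\rho)\cos(\pi\alpha\hat{\rho})+\sin(\pi\alpha\hat{\rho})\cos(\pi\alpha\rho)\bigr]-\bigl[\hat{\mathfrak{p}}\sin(\pi\alpha\rho)\cos(\pi\alpha\hat{\rho})+\mathfrak{p}\sin(\pi\alpha\hat{\rho})\cos(\pi\alpha\rho)\bigr],$$
and collapse the first bracket via the sine addition formula together with $\rho+\hat{\rho}=1$ to $\sin(\pi\alpha)$; the inequality $N<0$ is then manifestly the same as
$$\sin(\pi\alpha)<\hat{\mathfrak{p}}\sin(\pi\alpha\rho)\cos(\pi\alpha\hat{\rho})+\mathfrak{p}\sin(\pi\alpha\hat{\rho})\cos(\pi\alpha\rho).$$

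For the last sentence of the proposition I would contrapose the necessary part: if $\alpha\rho\leq\tfrac{1}{2}$ and $\alpha\hat{\rho}\leq\tfrac{1}{2}$, then $\cos(\pi\alpha\rho),\cos(\pi\alpha\hat{\rho})\geq 0$, and since $\mathfrak{p},\hat{\mathfrak{p}}\leq 1$ with all other factors nonnegative,
$$\hat{\mathfrak{p}}\sin(\pi\alpha\rho)\cos(\pi\alpha\hat{\rho})+\mathfrak{p}\sin(\pi\alpha\hat{\rho})\cos(\pi\alpha\rho)\leq \sin(\pi\alpha\rho)\cos(\pi\alpha\hat{\rho})+\sin(\pi\alpha\hat{\rho})\cos(\pi\alpha\rho)=\sin(\pi\alpha),$$
so the strict inequality of the proposition fails, establishing necessity. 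Non-sufficiency is seen by taking $\mathfrak{p}=\hat{\mathfrak{p}}=0$ together with any $\alpha\in(\tfrac{1}{2},1)$ such that $\alpha\rho>\tfrac{1}{2}$: the condition reduces to $\sin(\pi\alpha)<0$, which is false. I do not anticipate a substantive obstacle; the only point requiring a brief remark is the degenerate boundary case $\alpha=1$ with $\alpha\rho=\alpha\hat{\rho}=\tfrac{1}{2}$, where both sides of the equivalent inequality degenerate to $0$ and the strict inequality trivially fails.
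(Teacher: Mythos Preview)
Your proof is correct and follows essentially the same approach as the paper: identify the sign of $\chi'(0)$ with that of its numerator (since the denominator is strictly positive by $\alpha\rho,\alpha\hat\rho\in(0,1)$ and $\mathfrak{p},\hat{\mathfrak{p}}<1$), then use the sine addition formula for the equivalent form, and finally observe that both cosines are nonnegative when $\alpha\rho\le\tfrac12$ and $\alpha\hat\rho\le\tfrac12$. You go slightly beyond the paper's own proof by supplying an explicit witness for the ``not sufficient'' clause, which the paper asserts in the statement but does not argue in its proof.
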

\begin{proof}
The denominator of $\chi'(0)$ is always strictly positive because $0<\alpha\rho<1$ and $0<\alpha\hat{\rho}<1$ (see for instance \cite[p.~3]{deep1} for the identification of the admissible pairs $(\alpha,\rho)$ under our standing assumptions). Therefore the condition for $X^*$ to hit $0$ continuously, i.e. $\chi'(0)<0$, becomes the stated one, with its equivalent form following upon using the addition formula for the sine. For the final claim one notes that $\cos(\pi\alpha\hat{\rho})$ and $\cos(\pi\alpha\rho)$ are both nonnegative unless  $\alpha\hat{\rho}>\frac{1}{2}$ or $\alpha\rho>\frac{1}{2}$. 
\end{proof}

\section{Proofs of main results}\label{section:proofs}

\subsection{Proof of Theorem~\ref{proposition}}\label{subsection:proof-of-main-thm}
\noindent Let $\alpha<\beta$ and $\gamma<\delta$ be strictly positive real numbers and assume $\{\alpha-\beta,\gamma-\delta,\gamma-\alpha,\delta-\alpha, \gamma-\beta,\delta-\beta\}\cap \ZZ=\emptyset$, i.e. assume  $\gamma+\ZZ_{\leq 0}$, $\delta+\ZZ_{\leq 0}$, $\alpha+\ZZ_{\leq 0}$ and $\beta+\ZZ_{\leq 0}=\emptyset$ are pairwise disjoint. Then  we claim that $\{\gamma,\delta\}+ \ZZ_{\leq 0}$ interlaces with $\{\alpha,\beta\}+\ZZ_{\leq 0}$ if and only if either
\begin{enumerate}[(I)]
\item \label{interlace:I} there is a $k\in \mathbb{N}_0$ with $\beta-k-1<\gamma<\alpha<\delta-k<\beta-k$, or
\item\label{interlace:II} there is a $k\in \mathbb{N}$ with $\delta-k<\alpha<\gamma<\beta-k<\delta-k+1$.
\end{enumerate}
\smallskip

\noindent The conditions are clearly sufficient. For necessity, assume the interlacing property. Clearly we must have $\alpha<\delta<\beta$ and then either (i) $\gamma<\alpha<\delta<\beta$ or else (ii) $\alpha <\gamma<\delta<\beta$. Assume now (i). Then $\beta-1<\delta$. If even $\beta-1<\alpha$, then necessarily $\beta-1<\gamma$, and we are done. Otherwise $\gamma<\alpha<\beta-1<\delta<\beta$. But then we must have $\gamma<\alpha<\delta-1<\beta-1<\delta<\beta$. An inductive argument allows to conclude. Assume (ii). We must have $\alpha <\gamma<\beta-1<\delta<\beta$. If $\delta-1<\alpha$ then we are done. Otherwise $\alpha<\delta-1$ and necessarily $\alpha <\gamma<\beta-2<\delta-1<\beta-1<\delta<\beta$. An inductive argument allows to conclude.
Clearly \ref{interlace:I} and \ref{interlace:II} are exclusive. 
\smallskip

\noindent Assume now further that the interlacing property holds. The above characterization implies that $-\alpha-\beta<-\gamma-\delta<-\alpha-\beta+1 $, i.e. $\gamma+\delta<\alpha+\beta<\gamma+\delta+1$. \smallskip

\noindent Next, via Euler's infinite product formula for the gamma function, we may write, for $z\in \mathbb{C}$ where the expression is defined, 
$$\Phi(z):=\frac{\Gamma(z+\alpha)\Gamma(z+\beta)}{\Gamma(z+\gamma)\Gamma(z+\delta)}=\frac{\Gamma(\alpha)\Gamma(\beta)}{\Gamma(\gamma)\Gamma(\delta)}\prod_{n=0}^\infty \frac{(1-\frac{z}{-n-\gamma})(1-\frac{z}{-n-\delta})}{(1-\frac{z}{-n-\alpha})(1-\frac{z}{-n-\beta})}.$$
Then $\Phi$ is a real meromorphic function, and it follows from \cite[Theorem 27.2.1]{levin} and from the interlacing condition together with $\{\alpha,\beta,\gamma,\delta\}\subset (0,\infty)$ that it maps the open upper complex half-plane into itself, i.e. it is a Pick function, and hence a non-constant completely monotone Bernstein function. It follows furthermore \cite[Problem~27.2.1]{levin} that $\Phi$ admits the representation, for some  $\{\omega_1,\omega_2\}\subset \ZZ\cup \{-\infty,\infty\}$, $\omega_1\leq\omega_2$, and at least for $z$ in the open upper complex half-plane,
\begin{equation}\label{eq:H}
\Phi(z)=az+b+\frac{d}{z}+{\sum}_{n=\omega_1}^{\omega_2} c_n\left[\frac{1}{a_n-z}-\frac{1}{a_n}\right],
\end{equation} where $\{a,-d\}\subset [0,\infty)$, $b\in \mathbb{R}$, all the $c_n$ are nonnegative, the sequence $a_n$ (where $n\in\{\omega_1, \omega_1+1, \ldots, \omega_2\}$)  consists of non-zero real numbers and is strictly increasing, finally $\sum_{n=\omega_1}^{\omega_2}{c_n}/{a_n^2}<\infty$. Then the $a_n$ and $0$ must exhaust the poles of $\Phi$ and the $c_n$ must be the respective negatives of the residua of $\Phi$ at these poles, which can easily be computed: for $k\in \mathbb{N}_0$, 
\[
\mathrm{Res}(\Phi,-\alpha-k)=(-1)^k\frac{\Gamma(\beta-\alpha-k)}{k!\Gamma(\gamma-\alpha-k)\Gamma(\delta-\alpha-k)}
\]
and 
\[
\mathrm{Res}(\Phi,-\beta-k)=(-1)^k\frac{\Gamma(\alpha-\beta-k)}{k!\Gamma(\gamma-\beta-k)\Gamma(\delta-\beta-k)}.
\]
 The coefficient $d$ equals $0$ because $\Phi$ has no pole at $0$ (as $\{\alpha,\beta\}\subset (0,\infty)$). By continuity of $\Phi$ and dominated convergence for the right-hand side, the equality \eqref{eq:H} extends to $[0,\infty)$. Then by dominated convergence and by Stirling's formula for the gamma function, using $\alpha+\beta<\gamma+\delta+1$, we see that $a=\lim_{z\to \infty}{\Phi(z)}/{z}=0$ (limit over $(0,\infty)$).
\smallskip

\noindent On the other hand, from the definition of the hypergeometric  function ${_2F_1}$, for $s\in (0,\infty)$, \footnotesize
$$f(s)=\sum_{k=0}^\infty (-1)^{k+1}\frac{\Gamma(\beta-\alpha-k)}{k!\Gamma(\gamma-\alpha-k)\Gamma(\delta-\alpha-k)}e^{-(\alpha+k)s}+\sum_{k=0}^\infty (-1)^{k+1}\frac{\Gamma(\alpha-\beta-k)}{k!\Gamma(\gamma-\beta-k)\Gamma(\delta-\beta-k)}e^{-(\beta+k)s}.$$\normalsize
A direct computation using Tonelli-Fubini (note that we know a priori from the preceding that $\sum_{n=\omega_1}^{\omega_2}{c_n}/{a_n^2}<\infty$), then reveals that for $z\in \mathbb{C}$ with $\Re(z)\geq 0$,
\begin{align}
H(z)&:=\frac{\Gamma(\alpha)\Gamma(\beta)}{\Gamma(\gamma)\Gamma(\delta)}+\int_0^\infty (1-e^{-z s})f(s)ds\nonumber\\
&=\frac{\Gamma(\alpha)\Gamma(\beta)}{\Gamma(\gamma)\Gamma(\delta)}+\sum_{k=0}^\infty (-1)^{k+1}\frac{\Gamma(\beta-\alpha-k)}{k!\Gamma(\gamma-\alpha-k)\Gamma(\delta-\alpha-k)}\left(\frac{1}{\alpha+k}-\frac{1}{\alpha+k+z}\right)\nonumber\\
&\phantom{=}+\sum_{k=0}^\infty (-1)^{k+1}\frac{\Gamma(\alpha-\beta-k)}{k!\Gamma(\gamma-\beta-k)\Gamma(\delta-\beta-k)}\left(\frac{1}{\beta+k}-\frac{1}{\beta+k+z}\right).\label{eq:Phi}
\end{align}
\smallskip

\noindent Comparing \eqref{eq:H} and \eqref{eq:Phi}, we see that certainly $\Phi=H$ on $[0,\infty)$ and hence by analyticity and continuity on the whole of the closed right complex half-plane, which identifies the subordinator corresponding to $\Phi$ as being pure-jump with L\'evy density $f$. Incidentally,  because all of the $c_n$ must  be nonnegative, we also obtain that the two series appearing in the above representation of $f$ are each of positive terms only. 
\smallskip

\noindent 
The assumption $\{\alpha,\beta,\gamma,\delta\}\subset (0,\infty)$ ensures that $\Phi(0)>0$, i.e. the subordinator corresponding to $\Phi$ is killed. By monotone convergence $\Phi$ has infinite or finite activity according as $\int_0^\infty f(s)ds=\lim_{z\to \infty}\Phi(z)=\infty$ or $<\infty$, and Stirling's formula for the gamma function together with $\gamma+\delta<\alpha+\beta$ guarantees that it is the former that is taking place here. 
\smallskip

\noindent Further, as a complete Bernstein function, the potential measure associated to $\Phi$ has a density with respect to Lebesgue measure; cf. \cite[Remark~11.6]{bernstein}
and \cite[Lemma~2.3]{vondracek}. In order to check that \eqref{eq:potential} gives the potential density, write it out using the definition of ${_2F_1}$: for $x\in (0,\infty)$,\footnotesize
$$u(x)=\sum_{k=0}^\infty (-1)^{k}\frac{\Gamma(\delta-\gamma-k)}{k!\Gamma(\alpha-\gamma-k)\Gamma(\beta-\gamma-k)}e^{-(\gamma+k)x}+\sum_{k=0}^\infty (-1)^{k}\frac{\Gamma(\gamma-\delta-k)}{k!\Gamma(\alpha-\delta-k)\Gamma(\beta-\delta-k)}e^{-(\delta+k)x}.$$\normalsize
Note that the two series consist of positive terms owing to the characterizations \ref{charact:i}-\ref{charact:ii} and the properties of the sign of the gamma function on the real line. By Tonelli's theorem we compute then the Laplace transform $\hat{u}$ of  \eqref{eq:potential}, \footnotesize
$$\hat{u}(z)=\sum_{k=0}^\infty (-1)^{k}\frac{\Gamma(\delta-\gamma-k)}{k!\Gamma(\alpha-\gamma-k)\Gamma(\beta-\gamma-k)}\frac{1}{\gamma+k+z}+\sum_{k=0}^\infty (-1)^{k}\frac{\Gamma(\gamma-\delta-k)}{k!\Gamma(\alpha-\delta-k)\Gamma(\beta-\delta-k)}\frac{1}{\delta+k+z}$$ ($z\in \mathbb{C}$, $\Re(z)\geq 0$) \normalsize and check that it coincides with $1/\Phi$. 
\smallskip

\noindent To this end, let $z\in \mathbb{C}$, $\Re(z)\geq 0$; we verify that $\hat{u}(z)=\frac{1}{\Phi(z)}$.
But $\Phi$ being a complete Bernstein function implies \cite[Proposition~7.1]{bernstein} that $\frac{\mathrm{id}_{(0,\infty)}}{\Phi}$ is itself a complete Bernstein function. Then \cite[Remark~6.4]{bernstein} guarantees that one can write $$\frac{1}{\Phi(z)}=\frac{\Gamma(\gamma+z)\Gamma(\delta+z)}{\Gamma(\alpha+z)\Gamma(\beta+z)}=\frac{a}{z}+b+\int_{(0,\infty)}\frac{1}{z +t}\eta(\D t),\quad z\in (0,\infty),$$ where $\{a,b\}\subset [0,\infty)$ and $\eta$ is a measure on $\mathcal{B}_{(0,\infty)}$ such that $\int (1+t)^{-1}\eta(\D t)<\infty$. Clearly we must have $b=0$ because $\lim_{z\to\infty}{1}/{\Phi}(z)=0$. The measure  $\eta$ is supported by $\{\delta,\gamma\}+\mathbb{N}_0$, which follows by appealing to e.g. \cite[Corollary~6.3]{bernstein}.  Moreover,   $a=0$ because $\lim_{z\downarrow 0}{1}/{\Phi(z)}<\infty$. By analytic extension, the masses of $\eta$ are identified via identifying the residues of $\Phi(z)$, which concludes the argument.
\smallskip

\noindent When, ceteris paribus,  the interlacing condition fails, it follows from its representation as an infinite product given above, and from the characterization of Pick maps of \cite[Theorem 27.2.1]{levin} that the function $\Phi(z)$ is not Pick,  hence  also not a non-constant complete Bernstein function. 

\noindent The final assertion of the theorem follows by recalling  that complete Bernstein functions are closed under pointwise limits \cite[Corollary~7.6(ii)]{bernstein}.
\smallskip

\qed

\noindent Let us make some remarks concerning the above proof.  

\smallskip

\noindent First, to establish the complete Bernstein property of \eqref{doublebeta} under the ``interlacing conditions''  we could have relied directly on the results of \cite{meromorphic} (see Lemma~1 and the discussion following its proof therein). However, we wanted to establish the precise, not just sufficient, conditions under which the ``non-constant complete Bernstein'' property obtains, which is why we have provided above a direct and detailed study of the Pick property of \eqref{doublebeta}. Besides, this has the advantage of keeping our paper more self-contained. 

\smallskip

\noindent Secondly, the expression \eqref{eq:density} for the L\'evy density of course does not just ``fall out from the sky''; rather it is got by differentiating twice the inverse Laplace transform of 
\[
z\mapsto \frac{1}{z^2}\frac{\Gamma(\alpha+z)\Gamma(\beta+z)}{\Gamma(\gamma+z)\Gamma(\delta+z)}
\]
 (cf. \cite[Eq.~(3.4)]{bernstein}), and the latter in turn is obtained (at least on a pro forma level) by the Heaviside residue expansion \cite[Theorem~10.7(c)]{heaviside}. A similar remark pertains to \eqref{eq:potential}: one knows \cite[Eq.~(5.20)]{bernstein} that the Laplace transform of $U$ is $1/\Phi$, so that at least on a pro forma level one can Laplace invert via the Heaviside expansion.

\smallskip

\noindent Finally, we recover from the proof of Theorem~\ref{proposition} the non-obvious identities\footnotesize
$$
\frac{\Gamma(\alpha+z)\Gamma(\beta+z)}{\Gamma(\gamma+z)\Gamma(\delta+z)}=\frac{\Gamma(\alpha)\Gamma(\beta)}{\Gamma(\gamma)\Gamma(\delta)}+\sum_{k=0}^\infty (-1)^{k+1}\frac{\Gamma(\beta-\alpha-k)}{k!\Gamma(\gamma-\alpha-k)\Gamma(\delta-\alpha-k)}\left(\frac{1}{\alpha+k}-\frac{1}{\alpha+k+z}\right)
$$
$$+\sum_{k=0}^\infty (-1)^{k+1}\frac{\Gamma(\alpha-\beta-k)}{k!\Gamma(\gamma-\beta-k)\Gamma(\delta-\beta-k)}\left(\frac{1}{\beta+k}-\frac{1}{\beta+k+z}\right),\quad z\in \mathbb{C}\backslash (\{-\alpha,-\beta\}+\ZZ_{\leq 0})$$\normalsize
and\footnotesize
 $$\sum_{k=0}^\infty (-1)^{k}\frac{\Gamma(\delta-\gamma-k)}{k!\Gamma(\alpha-\gamma-k)\Gamma(\beta-\gamma-k)}\frac{1}{\gamma+k+z}+\sum_{k=0}^\infty (-1)^{k}\frac{\Gamma(\gamma-\delta-k)}{k!\Gamma(\alpha-\delta-k)\Gamma(\beta-\delta-k)}\frac{1}{\delta+k+z}$$
$$=\frac{\Gamma(\gamma+z)\Gamma(\delta+z)}{\Gamma(\alpha+z)\Gamma(\beta+z)},\quad z\in \mathbb{C}\backslash (\{-\delta,-\gamma\}+\ZZ_{\leq 0}).$$\normalsize

\subsection{Proof of Propositon \ref{doubledensity}}\phantom{phantom}

\noindent  \ref{further:characteristics}. The meromorphic property can be seen from \cite[Theorem~1(v)]{meromorphic}, 
using simply the representation of the real meromorphic function $\psi:=(z\mapsto -\Psi(-\i z))$ as an infinite product, that was essentially procured already in the proof of Theorem~\ref{proposition} (via Euler's infinite product formula for the gamma function).
 Then, according to \cite[p. 1105]{meromorphic}, this means a priori that the L\'evy measure $\Pi$ of $\xi$ admits a density $\pi$ of the form $$\frac{\pi(dx)}{dx}=\mathbbm{1}_{(0,\infty)}(x)\sum_{n\in \mathbb{N}}a_n\rho_n e^{-\rho_n x}+\mathbbm{1}_{(-\infty,0)}(x)\sum_{n\in \mathbb{N}}\hat{a}_n\hat{\rho}_n e^{\hat{\rho}_nx},\quad x\in \mathbb{R},$$ where all the coefficients are nonnegative and the sequences $(\rho_n)_{n\in \mathbb{N}}$ and $(\hat{\rho}_n)_{n\in \mathbb{N}}$ are strictly positive and strictly increasing to $\infty$. Furthermore, from  \cite[Eq.~(8)]{meromorphic}, we have that for some $\sigma^2\in[0,\infty)$ and $\mu\in \mathbb{R}$, \footnotesize $$-\frac{\Gamma(\alpha-z)\Gamma(\beta-z)}{\Gamma(\gamma-z)\Gamma(\delta-z)}\frac{\Gamma(\hat{\alpha}+z)\Gamma(\hat{\beta}+z)}{\Gamma(\hat{\gamma}+z)\Gamma(\hat{\delta}+z)}=\psi(z)=\Psi(0)+\frac{1}{2}\sigma^2z^2+\mu z+z^2\sum_{n\in \mathbb{N}}\left[\frac{a_n}{\rho_n(\rho_n-z)}+\frac{\hat{a}_n}{\hat{\rho}_n(\hat{\rho}_n+z)}\right].$$ \normalsize Dividing in the previous display by $z^2$, and considering the residua, we find: 
that $-\frac{a_n}{\rho_n}=\mathrm{Res}(\psi(z)/z^2;z=\rho_n)$, i.e. $a_n\rho_n=\mathrm{Res}(-\psi;\rho_n)=-\mathtt{B}(\hat{\alpha}, \hat{\beta}, \hat{\gamma}, \hat{\delta};\rho_n)\mathrm{Res}(\mathtt{B}(\alpha, \beta, \gamma, \delta;\cdot),-\rho_n)$; similarly that $\frac{\hat{a}_n}{\hat{\rho}_n}=\mathrm{Res}(\psi(z)/z^2;z=-\hat{\rho}_n)$, i.e. $\hat{a}_n\hat{\rho}_n=-\mathtt{B}(\alpha, \beta, \gamma, \delta;\hat{\rho}_n)\mathrm{Res}(\mathtt{B}(\hat{\alpha}, \hat{\beta}, \hat{\gamma}, \hat{\delta};\cdot),-\hat{\rho}_n)$; and that the $\rho_n$, $n\in \mathbb{N}$, must run over $\{\alpha,\beta\}+\mathbb{N}_0$, while the $\hat{\rho}_n$, $n\in \mathbb{N}$, must run over $\{\hat{\alpha},\hat{\beta}\}+\mathbb{N}_0$. Computing the residua and simplifying yields the expression for the L\'evy density.

\ref{further:characteristics:Gaussian}. By the asymptotic properties of the gamma function \cite[Formula~8.328.1]{tables} we see that for $\theta\in  \mathbb{R}$, $\vert\Psi(\theta)\vert\sim \vert \theta\vert^{\alpha+\beta+\hat{\alpha}+\hat{\beta}-\gamma-\delta-\hat{\gamma}-\hat{\delta}}$, as $\vert \theta\vert\to \infty$. Now the claim follows using \cite[Proposition~I.2(i)]{bertoin}, since we know a priori that $\alpha+\beta\leq \gamma+\delta+1$ and $\hat{\alpha}+\hat{\beta}\leq \hat{\gamma}+\hat{\delta}+1$.

\ref{further:characteristics:lifetime}. This follows from  inspecting  whether or not the  Laplace exponents of the increasing and decreasing ladder heights subordinators of $\xi$ (which are known from Corollary~\ref{corollary}) vanish at $0$.

\subsection{Proof of Proposition \ref{proposition:mellin}}
\noindent As alluded to above, we will use the verification argument of \cite[Proposition~2]{hg-bis}. The idea there is to verify that the right hand side of \eqref{eq:mellin} has certain analytical properties so that it matches the unique solution of the functional equation that the left-hand side of \eqref{eq:mellin} must necessarily solve (cf. \cite{MZ}).

\smallskip

\noindent
The Laplace exponent $\psi_{c}(z): = \log \mathbf{E}[\exp(z\xi_1/{c})]$ is given by the map $z\mapsto \psi(z/{c})$, where $\psi$ is the Laplace exponent of $\xi$.  The conditions on the parameters ensure  that $\xi/{c}$ either drifts to $\infty$ or is killed (see Proposition~\ref{doubledensity}\ref{further:characteristics:lifetime}, and that it satisfies the so-called Cram\'er's condition,
 $\psi_{c}(-\hat{\gamma}{c})=\psi(-\hat{\gamma})=0$ and $\psi_{c}$ is finite on $(-\hat{\alpha}{c},0)$, which contains $-\hat{\gamma}{c}$. 
Furthermore, letting $\mathcal{M}(s)$ be the right-hand side of \eqref{eq:mellin}, we see immediately from the properties of the double gamma function that
 $\mathcal{M}$ is analytic and free of zeros in the strip $\Re(s)\in (0,1+\hat{\gamma}{c})$ and 
 $\mathcal{M}(1)=1$ and $\mathcal{M}(s+1)=-s\mathcal{M}(s)/\psi_{c}(-s)$ for $s\in (0,\hat{\gamma}{c})$.
\smallskip

\noindent It remains to check that 
$\lim_{\vert y\vert \to\infty}\vert \mathcal{M}(x+iy)\vert^{-1}e^{-2\pi \vert y\vert}=0$ uniformly in  $x\in (0,\hat{\gamma}{c}+1)$.
Lemma 1 in \cite{hg-bis} implies that, as $\vert y\vert\to\infty$, 
$$\ln\left\vert \frac{G(p+iy;{c})}{G(q+iy;{c})}\right\vert=\frac{p-q}{{c}}\Re[(q+iy)\ln(q+iy)]+o(\vert y\vert)=-\pi\frac{p-q}{2{c}}\vert y\vert+o(\vert y\vert),$$
 uniformly in bounded real $p$ and $q$. Note this was  observed in \cite[p. 44]{budd} albeit there  without the uniformity, which is needed for our purposes.
 At the same time, by Stirling's asymptotic formula for the gamma function \cite[Formula~8.327]{tables},  as $\vert y\vert\to \infty$, we have
$$ \ln\vert \Gamma(x+iy)\vert=\Re[(x+iy)(\ln(x+iy)-1)]+o(\vert y\vert)=-\frac{\pi}{2}\vert y\vert+o(\vert y\vert),$$
uniformly for  $x$ bounded in $ \mathbb{R}$.

\smallskip

\noindent
 Therefore, it follows that, uniformly for $x\in (0,\hat{\gamma}{c}+1)$, as $\vert y\vert\to \infty$, we have
 $$\ln\vert \mathcal{M}(x+iy)\vert=-\frac{\pi}{2}\left(1+\frac{1}{2}(\gamma+\delta-\alpha-\beta+\hat{\alpha}+\hat{\beta}-\hat{\gamma}-\hat{\delta})\right)\vert y\vert+o(\vert y\vert).$$ 
Because $\gamma+\delta\leq \alpha+\beta$ and $\hat{\alpha}+\hat{\beta}\leq \hat{\gamma}+\hat{\delta}+1$ (see Theorem~\ref{proposition}), which gives us that  $1+(\gamma+\delta-\alpha-\beta+\hat{\alpha}+\hat{\beta}-\hat{\gamma}-\hat{\delta})/2<4$, the proof is now complete.\qed

\section*{Acknowledgements}
AEK acknowledges financial support from the EPSRC (EP/P009220/1 \& EP/L002442/1).
MV acknowledges financial support from the Slovenian Research Agency (research programmes Nos. P1-0222 \& P1-0402). JCP acknowledges support from   CONACyT-MEXICO. JCP and MV are grateful for the kind hospitality of the University of Bath, where both  were on a sabbatical while this research was conducted, with JCP holding a David Parkin Visiting Professorship  at the same university.
We thank Alexey Kuznetsov for providing some very useful insight into Pick functions and an anonymous Referee for his/her careful review of our paper.

\bibliographystyle{plain}
\bibliography{Biblio-ricocheted}

\begin{thebibliography}{10}

\bibitem{asmussen}
S.~Asmussen.
\newblock {\em Applied Probability and Queues}.
\newblock Applications of mathematics: stochastic modelling and applied
  probability. Springer, 2003.

\bibitem{bertoin}
J.~Bertoin.
\newblock {\em {L\'e}vy Processes}.
\newblock Cambridge Tracts in Mathematics. Cambridge University Press,
  Cambridge, 1996.

\bibitem{bertoin-yor}
J.~Bertoin and M.~Yor.
\newblock On subordinators, self-similar markov processes and some
  factorizations of the exponential variable.
\newblock {\em Electronic Communications in Probability}, 6:95--106, 2001.

\bibitem{budd}
T.~Budd.
\newblock The peeling process on random planar maps coupled to an {$O(n)$} loop
  model.
\newblock 2018.
\newblock arXiv:1809.02012v1.

\bibitem{cc}
M.~E. Caballero and L.~Chaumont.
\newblock Conditioned stable {L\'e}vy processes and the {L}amperti
  representation.
\newblock {\em Journal of Applied Probability}, 43(4):967--983, 2006.

\bibitem{CCRtba}
M.~E. Caballero, L.~Chaumont, and V.~Rivero.
\newblock Resurrected {L}\'evy processes.
\newblock {\em Preprint}, 2019.

\bibitem{lamperti-stable}
M.~E. Caballero, J.~C. Pardo, and J.~L. P{\'e}rez.
\newblock On {L}amperti stable processes.
\newblock {\em Probability and mathematical statistics}, 30(1):1--28, 2010.

\bibitem{rivero}
L.~Chaumont, H.~Pant{\'i}, and V.~Rivero.
\newblock The {L}amperti representation of real-valued self-similar {M}arkov
  processes.
\newblock {\em Bernoulli}, 19(5B):2494--2523, 2013.

\bibitem{Chy-Lam}
O.~Chybiryakov.
\newblock The {L}amperti correspondence extended to {L}\'evy processes and
  semi-stable {M}arkov processes in locally compact groups.
\newblock {\em Stochastic Processes and their Applications}, 116(5):857--872,
  2006.

\bibitem{doring}
S.~Dereich, L.~D{\"o}ring, and A.~E. Kyprianou.
\newblock Real self-similar processes started from the origin.
\newblock {\em The Annals of Probability}, 45(3):1952--2003, 05 2017.

\bibitem{tables}
I.~S. Gradshteyn and I.~M. Ryzhik.
\newblock {\em Table of Integrals, Series, and Products}.
\newblock Elsevier Science, 2007.

\bibitem{GV}
S.~E. Graversen and J.~Vuolle-Apiala.
\newblock {$\alpha$}-self-similar {M}arkov processes.
\newblock {\em Probability Theory and Related Fields}, 71(1):149--158, 1986.

\bibitem{heaviside}
P.~Henrici.
\newblock {\em Applied and Computational Complex Analysis, Special
  Functions-Integral Transforms- Asymptotics-Continued Fractions}.
\newblock Applied and Computational Complex Analysis. Wiley, 1991.

\bibitem{hk}
E.~L. Horton and A.~E. Kyprianou.
\newblock More on hypergeometric {L}\'{e}vy processes.
\newblock {\em Advances in Applied Probability}, 48(A):153--158, 2016.

\bibitem{Kiu}
S.~W. Kiu.
\newblock Semistable {M}arkov processes in {${\bf R}^{n}$}.
\newblock {\em Stochastic Processes and their Applications}, 10(2):183--191,
  1980.

\bibitem{meromorphic}
A.~Kuznetsov, A.~E. Kyprianou, and J.~C. Pardo.
\newblock Meromorphic {L\'e}vy processes and their fluctuation identities.
\newblock {\em The Annals of Applied Probability}, 22(3):1101--1135, 06 2012.

\bibitem{hg}
A.~Kuznetsov, A.~E. Kyprianou, J.~C. Pardo, and K.~van Schaik.
\newblock A {W}iener-{H}opf {M}onte {C}arlo simulation technique for {L\'e}vy
  processes.
\newblock {\em The Annals of Applied Probability}, 21(6):2171--2190, 12 2011.

\bibitem{KKPW}
A.~Kuznetsov, A.~E. Kyprianou, J.~C. Pardo, and A.~R. Watson.
\newblock The hitting time of zero for a stable process.
\newblock {\em Electronic Journal of Probability}, 19:no. 30, 26, 2014.

\bibitem{hg-bis}
A.~Kuznetsov and J.~C. Pardo.
\newblock Fluctuations of stable processes and exponential functionals of
  hypergeometric {L\'e}vy processes.
\newblock {\em Acta Applicandae Mathematicae}, 123(1):113--139, Feb 2013.

\bibitem{kyprianou}
A.~E. Kyprianou.
\newblock {\em {Fluctuations of {L\'e}vy Processes with Applications:
  Introductory Lectures}}.
\newblock Springer-Verlag, Berlin Heidelberg, 2014.

\bibitem{deep1}
A.~E. Kyprianou.
\newblock Deep factorisation of the stable process.
\newblock {\em Electroninc Journal of Probability}, 21:Paper No. 23, 28, 2016.

\bibitem{ehg}
A.~E. Kyprianou, J.~C. Pardo, and A.~R. Watson.
\newblock The extended hypergeometric class of {L\'e}vy processes.
\newblock {\em Journal of Applied Probability}, 51(A):391–408, 2014.

\bibitem{Watson-censored}
A.~E. Kyprianou, J.~C. Pardo, and A.~R. Watson.
\newblock Hitting distributions of {$\alpha$}-stable processes via path
  censoring and self-similarity.
\newblock {\em The Annals of Probability}, 42(1):398--430, 2014.

\bibitem{levin}
B.~Y. Levin.
\newblock {\em Lectures on Entire Functions}.
\newblock Translations of Mathematical Monographs. American Mathematical
  Society, 1996.

\bibitem{MZ}
K.~Maulik and B.~Zwart.
\newblock Tail asymptotics for exponential functionals of {L}\'{e}vy processes.
\newblock {\em Stochastic Processes and their Applications}, 116(2):156--177,
  2006.

\bibitem{rogers}
L.~C.~G. Rogers.
\newblock Wiener-{H}opf factorization of diffusions and {L}\'{e}vy processes.
\newblock {\em Proceedings of the London Mathematical Society. Third Series},
  47(1):177--191, 1983.

\bibitem{bernstein}
R.~L. Schilling, R.~Song, and Z.~Vondra{\v{c}}ek.
\newblock {\em Bernstein Functions: Theory and Applications}.
\newblock De Gruyter studies in mathematics. W. De Gruyter, 2010.

\bibitem{vondracek}
R.~Song and Z.~Vondra{\v c}ek.
\newblock Some remarks on special subordinators.
\newblock {\em Rocky Mountain Journal of Mathematics}, 40(1):321--337, 02 2010.

\bibitem{vigon}
V.~Vigon.
\newblock {\em Simplifiez vos {L\'e}vy en titillant la factorisation de
  {W}iener-{H}opf. {Th\'e}se de {l’INSA}, {R}ouen}.
\newblock 2002.

\bibitem{VG}
J.~Vuolle-Apiala and S.~E. Graversen.
\newblock Duality theory for self-similar processes.
\newblock {\em Annales de l'Institut Henri Poincar{\'e}. Probabilit{\'e}s et
  Statistique}, 22(3):323--332, 1986.

\bibitem{watson-private}
A.~R. Watson.
\newblock {\em Private communication}, 2019.

\end{thebibliography}
\end{document}